\newcommand{\ii}{\mathrm{i}}
\newcommand{\plushalf}{ + \textstyle\frac12\displaystyle}
\newcommand\supp{\mathop{\rm supp}}
\providecommand{\abs}[1]{\lvert#1\rvert}
\providecommand{\bigabs}[1]{\bigl\lvert#1\bigr\rvert}
\providecommand{\Bigabs}[1]{\Bigl\lvert#1\Bigr\rvert}
\providecommand{\biggabs}[1]{\biggl\lvert#1\biggr\rvert}
\providecommand{\norm}[1]{\lVert#1\rVert}
\providecommand{\bignorm}[1]{\bigl\lVert#1\bigr\rVert}
\providecommand{\Bignorm}[1]{\Bigl\lVert#1\Bigr\rVert}
\newtheorem{theorem}{Theorem}
\newtheorem{lemma}[theorem]{Lemma}
\newtheorem{prop}[theorem]{Proposition}
\newtheorem{cor}[theorem]{Corollary}
\theoremstyle{definition}
\newtheorem{assumption}{Assumption}
\theoremstyle{remark}
\newtheorem{remark}[theorem]{Remark}
\newcommand{\cN}{{\mathcal{N}}}
\newcommand{\cF}{{\mathcal{F}}}
\newcommand{\sdd}{\,\mathrm{d}}
\newcommand{\EE}{\mathbb{E}}
\newcommand{\bbS}{\mathbb{S}}
\newcommand{\Chi}{\raise .3ex
\hbox{\large $\chi$}}
\newcommand{\R}{\mathbb{R}}
\newcommand{\N}{\mathbb{N}}
\newcommand{\Z}{\mathbb{Z}}
\newcommand{\dx}{\,\mathrm{d}x}
\newcommand{\dt}{\,\mathrm{d}t}
\newcommand{\dS}{\,\mathrm{d}\sigma}
\renewcommand{\Re}{\ensuremath{\mathrm{Re\,}}}
\renewcommand{\Im}{\ensuremath{\mathrm{Im\,}}}
\newcommand{\cY}{\mathsf{Y}}
\numberwithin{equation}{section}
\date{\today}
\title[Multilevel Representations of Random Fields on the Sphere]{Multilevel Representations of Isotropic Gaussian Random Fields on the Sphere}
\author{Markus Bachmayr$^1$}
\address{\rm $^1$ Institut f\"ur Mathematik, Johannes Gutenberg-Universit\"at Mainz, Staudingerweg 9, 55128 Mainz, Germany}
\email[Markus Bachmayr]{bachmayr@uni-mainz.de}
\author{Ana Djurdjevac$^2$}
\address{\rm $^2$ Institut f\"ur Mathematik, Freie Universit\"at Berlin, Arnimallee 6, 14195 Berlin, Germany}
\email[Ana Djurdjevac]{adjurdjevac@zedat.fu-berlin.de}
\thanks{
 The authors would like to thank the Isaac Newton Institute for
  Mathematical Sciences, Cambridge, for support and hospitality during
  the programme ``Uncertainty Quantification for Complex Systems'' where work on this paper was
  undertaken. This work was supported by EPSRC grant no EP/K032208/1. 
M.B.\ acknowledges funding by
the Deutsche Forschungsgemeinschaft (DFG) TRR 146 (project number 233630050).
A.Dj.\ was supported by the Deutsche Forschungsgemeinschaft (DFG, German Research Foundation) under Germany's Excellence Strategy -- The Berlin Mathematics
Research Center MATH+ and CRC 1114 ``Scaling Cascades in Complex Systems''.} 
\begin{document}

\begin{abstract}
	Series expansions of isotropic Gaussian random fields on $\mathbb{S}^2$ with independent Gaussian coefficients and localised basis functions are constructed. Such {representations with multilevel localised structure} provide an alternative to the standard Karhunen-Lo\`eve expansions of isotropic random fields in terms of spherical harmonics. The basis functions are obtained by applying the square root of the covariance operator to spherical needlets. Localisation of the resulting covariance-dependent multilevel basis is shown under decay conditions on the angular power spectrum of the random field. In addition, numerical illustrations are given and an application to random elliptic PDEs on the sphere is analysed.

\noindent \emph{Keywords.}  isotropic Gaussian random fields, random series expansions, spherical needlets, localisation
\smallskip

\noindent \emph{Mathematics Subject Classification.} {60G15, 60G60, 43A90, 65T99}
\end{abstract}

\maketitle

\section{Introduction}

Random fields on the sphere $\bbS^2$ are an important tool in many disciplines where measurement data are defined on a sphere, for instance geophysics, climatology, oceanography, and astrophysics; for an overview, we refer to \cite{MP,hilbe2012astrostatistical,jeong2017spherical,porcu2018modeling}. 
A random field on $\bbS^2$ is a mapping $u: \Omega \times \bbS^2 \to \R$ that is $\mathcal{F} \otimes \mathcal{B}(\bbS^2)$-measurable, where $(\Omega, \mathcal{F}, \mathbb{P})$ is a probability space.
Such a random field on $\bbS^2$ is called \emph{Gaussian} if for every $k \in \N$ and $s_1,\ldots, s_k \in \bbS^2$, the vector $(u(s_1), \dots, u(s_k))$ has multivariate Gaussian distribution.
A prominent application of Gaussian random fields on $\bbS^2$ in cosmology is in the analysis of the cosmic microwave background radiation  \cite{mcewen2007cosmological}. In these applications \emph{isotropic} random fields, whose probability law is invariant under spherical rotations, play a central role.

Beyond the efficient sampling of random fields on $\bbS^2$ (see, for instance, \cite{creasey2018fast,emery2019turning,HKS}), 
many tasks in the analysis and computational treatment of random fields require their expansion as series of functions with random scalar coefficients.
Note that by subtracting $\EE[u]$ from any given random field $u$, we can restrict ourselves to the case of \emph{centred} random fields with $\EE[u] = 0$.
The classical Karhunen-Lo\`eve (KL) expansion of a centred Gaussian random field $u$ on $\bbS^2$ yields a random series representation 
\begin{equation}\label{genKL}
  u(s) = \sum_{i = 1}^\infty z_i \varphi_i(s), \quad s \in \bbS^2,
\end{equation}
where $(z_i)_{i \in \N}$ is a sequence of independent scalar Gaussian random variables.
The functions $\varphi_i \in L_2(\bbS^2)$ are determined as eigenfunctions of the covariance operator of $u$. 
While this expansion yields the most rapid convergence in $L_2$-norm, the functions $\varphi_i$ typically exhibit global oscillations.

However, alternative expansions in terms of basis functions with spatial localisation can be advantageous.
A classical example of such a localised expansion is the L\'evy-Ciesielsky representation \cite{C61} of the Brownian bridge $b$ on $[0,1]$, 
\begin{equation}\label{levyciesielsky}
    b(x) = \sum_{j = 0}^\infty \sum_{k = 0}^{2^j-1} y_{jk} \, 2^{-\frac{j}{2}} h( 2^j x - k), \quad x \in [0,1].
\end{equation}
Here $y_{jk} \sim \cN(0,1)$ are independent and $h(x) =\max\{  1 - 2 \abs{  x - \frac12 }, 0\}$ is a piecewise affine linear hat function supported on $[0,1]$, and thus $\abs{\supp h( 2^j x - k)} = 2^{-j}$. The representation \eqref{levyciesielsky} needs to be compared to the KL expansion of $b$, which reads 
\begin{equation}\label{bbkl}
   b(x) = \sum_{i = 1}^\infty y_i \frac{ \sin (\pi i x) }{\pi i}, 
\end{equation}
where $y_i \sim \cN(0,1)$. Both expansions describe the same random field in terms of independent scalar random variables, but the L\'evy-Ciesielsky representation is especially well-suited for studying the regularity of realisations in H\"older or more general Besov spaces \cite{C61,Roy}.

Multilevel representations in terms of basis functions having similar localisation as in \eqref{levyciesielsky} were constructed in \cite{BCM}
for a more general stationary Gaussian random fields $u$ on domains $D \subset \R^{{m}}$, {$m \in \N$}.
The covariance function of such stationary random fields is of the form
\begin{equation}\label{eq:stationarycov}
   \EE[ u(x) \,u(x') ] = k(x - x'), \quad x,x' \in D,
\end{equation}
for a function $k$ on $\R^{{m}}$ with non-negative Fourier transform $\hat k$, defined as
\[
   \hat k(\xi) =  \mathcal{F}[k](\xi) = \int_{\R^{{m}}} k(x)\, e^{-\ii \xi x} \,dx, \quad \xi \in \R^{{m}}.
\]
In \cite{BCM}, multilevel representations are obtained for a class of $k$ comprising in particular the family of Mat\'ern covariances. These covariances are given in terms of their Fourier transforms by
\[
 \hat k(\xi) = c_{\nu,\lambda}\, \biggl( \frac{2\nu}{\lambda^2} + \abs{\xi}^2  \biggr)^{-(\nu + d/2)}, \quad c_{\nu,\lambda} := \frac{ 2^d \pi^{d/2} \Gamma(\nu + d/2) (2\nu)^\nu}{ \Gamma(\nu) \lambda^{2\nu}},
\]
for $\xi \in \R^{{m}}$,
with parameters $\nu,\lambda>0$. The basis functions constructed in \cite{BCM} have similar properties as the classical Meyer wavelets.

In computational methods for partial differential equations (PDEs) with random fields as coefficients,  representations of random fields in terms of hierarchical multilevel basis functions as in \eqref{levyciesielsky} can also {have advantages.} {For sparse polynomial approximations of random PDEs on domains $D\subset \R^m$ with lognormally distributed diffusion coefficients (that is, coefficients of the form $\exp(b)$, where $b$ is a Gaussian random field), improved convergence rates have been obtained in \cite{BCDM} based on such multilevel expansions of $b$. 
For slightly simplified model problems where the random field expansion enters in the diffusion coefficient in an affine manner, multilevel expansions have been shown to enable the construction of adaptive stochastic Galerkin methods of near-optimal computational complexity \cite{BCD,BV} for the computation of sparse polynomial approximations. In contrast, algorithms with these near-optimality properties are not available for KL-type expansions of random fields.}
The choice of a series expansion can thus be interpreted as a choice of coordinates for the random field that has computational implications.
{In a similar vein, in \cite{HS19a,HS19b} and independently in \cite{K19}, multilevel expansions have served as the basis of improved Quasi-Monte Carlo (QMC) integration methods for lognormal random PDEs.}

In this work, we consider expansions of {Gaussian} random fields on the sphere that have similar hierarchical multilevel structure as in previous constructions on domains, where we focus on the case of \emph{isotropic} random fields on $\bbS^2$.
The random field $u$ is called (strongly) isotropic if for every $k \in \N$, $s_1, \dots, s_k \in \bbS^2$ and $g \in \mathrm{SO}(3) $, the random vectors $(u(s_1), \dots, u(s_k))$ and $(u(gs_1), \dots, u(gs_k))$ have the same law.
The random field $u$ is said to be 2-weakly isotropic if $u(s) \in L_2(\Omega)$ for every $s \in \bbS^2$ and  
\[
\EE[u(s_1)] = \EE[u(gs_1)], \quad
\EE[u(s_1)\, u(s_2)] = \EE[ u(g s_1) \, u(g s_2)]
\]
for all $s_1, s_2 \in \bbS^2$ and all $g \in \mathrm{SO}(3)$.
Gaussian random fields are isotropic if and only if they are 2-weakly isotropic \cite[Prop.~5.10(3)]{MP}; in other words, they are isotropic as soon as their expectations and covariance functions are invariant under rotations. 
In the case of centred $u$ that we consider, this reduces to the condition that there exists a function $\rho\colon [-1,1] \to \R$ such that 
\begin{equation}\label{rhodef0}
  \rho(s_1 \cdot s_2) = \EE[ u(s_1)\,u(s_2)],
\end{equation}
where the inner product $s_1 \cdot s_2$ equals the cosine of the angle between $s_1$ and $s_2$.

The KL expansion of a centred isotropic Gaussian random field yields a decomposition in terms of spherical harmonics $Y_{\ell m}$, $\ell \geq 0$, $m = -\ell, \ldots, \ell$, which are eigenfunctions of the Laplace-Beltrami operator on $\bbS^2$. The expansion \eqref{genKL} thus takes the form
\begin{equation}\label{KLsph}
   u = \sum_{\ell = 0}^\infty \sum_{m=-\infty}^\infty z_{\ell m} Y_{\ell m},
\end{equation}
with $z_{\ell m} \sim \cN ( 0, A_\ell)$, where the positive real sequence $\mathsf{A} = (A_\ell)_{\ell \in \N_0}$ is called the \emph{power spectrum} of $u$. 
For a detailed study of this expansion, see \cite{LS}.

The alternative type of expansion with localised basis functions that we consider is based on spherical needlets \cite{NPW}. These are functions $\psi_{j k}$ with a scale parameter (or level) $j \in \N_0$ and an angular index $k \in \{1,\ldots, n_j\}$, which have localisation properties of the following type:
with the geodesic distance
\begin{equation}\label{geodesic}
  d(s, s') = \arccos(s \cdot s')
\end{equation}
on $\bbS^2$,
for each $\psi_{jk}$ there exists a point $\xi_{jk} \in \bbS^2$ such that 
\begin{equation}\label{localizationintro}
     \abs{\psi_{jk}(s)} \leq \frac{C 2^j }{ 1 + \bigl(2^j d(s, \xi_{jk})\bigr)^r  }, \quad s \in \bbS^2,
   \end{equation}
for some (or, depending on the precise construction, \emph{any}) $r \in \N$ with $C = C(r) > 0$; that is, $\psi_{jk}$ is concentrated near $\xi_{jk}$ and decays rapidly with increasing angular distance to this point.

As shown in \cite{NPW}, the points $\xi_{jk}$ can be chosen constructively such that the family $\{ \psi_{jk}\colon j \in \N_0, k = 1,\ldots,n_j \}$ is a \emph{Parseval frame} of $L_2(\bbS^2)$. This means that for any $f \in L_2(\bbS^2)$,
\[
   \norm{f}_{L_2(\bbS^2)}^2 = \sum_{j,k} \abs{\langle f, \psi_{jk}\rangle}^2
  \quad\text{and}\quad
  f = \sum_{j,k} \langle f, \psi_{jk}\rangle_{L_2(\bbS^2)} \psi_{jk},
\]
as for orthonormal bases of $L_2(\bbS^2)$, but without any requirement of linear independence of frame elements.
In \cite{BKMP}, it was shown that the needlet coefficients $\langle u, \psi_{jk}\rangle_{L_2(\bbS^2)}$ of a weakly isotropic (not necessarily Gaussian) second-order random field $u$ on $\bbS^2$, which are scalar random variables, are \emph{asymptotically uncorrelated} in the following sense: under certain decay conditions on the power spectrum $\mathsf{A} = (A_\ell)_{\ell \geq 0}$, for $k, k' \in \{ 1,\ldots,n_j\}$ one has
\begin{equation}\label{introcorrest}
\begin{aligned}
 \operatorname{Corr} \bigl( \langle u, \psi_{jk} \rangle_{L_2} , \langle u, \psi_{jk'} \rangle_{L_2} \bigr)
  &= \frac{ \EE [ \langle u, \psi_{jk} \rangle_{L_2} \langle u, \psi_{jk'} \rangle_{L_2} ]}{ \sqrt{ \EE \abs{\langle u, \psi_{jk} \rangle_{L_2}}^2 \,\EE  \abs{\langle u, \psi_{jk'} \rangle_{L_2}}^2 } } \\
  &\leq \frac{C_M}{ 1 + \bigl(2^j d( \xi_{jk} , \xi_{jk'})\bigr)^{M}  }
\end{aligned}
\end{equation}
for an $M\in \N$ and $C_M>0$. The correlation of coefficients corresponding to needlets with some fixed angular separation thus decreases rapidly with increasing level $j$.

We show that for any given isotropic Gaussian random field $u$, there exist modified needlets $\psi^\mathsf{A}_{jk}$ such that one has the expansion
\begin{equation}\label{introneedletexpansion}
  u = \sum_{j=0}^\infty \sum_{k = 1}^{n_j} y_{jk}  \psi^\mathsf{A}_{jk}
\end{equation} 
in terms of independent scalar Gaussian random variables $y_{jk} \sim \cN(0,1)$. This provides an alternative to the KL expansion \eqref{KLsph}, and the expansion coefficients are exactly uncorrelated: $\operatorname{Corr}( y_{jk} \, y_{j'k'} ) = 0$ for all $j,j'$ and $k,k'$ such that $j \neq j'$ or $k\neq k'$. The modified needlets $\psi^\mathsf{A}_{jk}$ still have the same localisation property \eqref{localizationintro} as the standard needlets $\psi_{jk}$, but with $r$ limited by certain features of the power spectrum of $u$. In other words, whereas coefficients in the expansion with respect to standard needlets $\psi_{jk}$ are \emph{dependent} (although \emph{asymptotically} uncorrelated) random variables, the modified needlets $\psi^\mathsf{A}_{jk}$ are adapted to the given random field to give, as in the KL case, a series expansion with \emph{independent} coefficients. This independence property is crucial in many applications in uncertainty quantification, for instance in the sparse polynomial approximation of random fields derived from $u$ as considered at the end of this work.

A necessary and sufficient condition on $\psi^\mathsf{A}_{jk}$ for the expansion \eqref{introneedletexpansion} in terms of independent scalar random variables $y_{jk}$ to hold was established in a more general setting in \cite{LP}. This condition is related to the \emph{reproducing kernel Hilbert space} (also known as Cameron-Martin space), denoted $\mathcal{H}$, of the random field $u$, which is defined as follows: the $\mathcal{H}$-inner product is first defined for finite linear combinations of the form 
\begin{equation}\label{kernelsum}
  \sum_{n = 0}^N \alpha_i \rho_{s_i}, \qquad s_1,\ldots, s_N \in \bbS^2, \quad \alpha_1, \ldots \alpha_N \in \R,\quad N \in \N,
\end{equation}
with $\rho_s(t) := \rho( s \cdot t)$ as
\[
   \langle \rho_{s}, \rho_{s'} \rangle_{\mathcal{H}} := \rho( s \cdot s'),
\]
and $\mathcal{H}$ is obtained as the closure of expressions \eqref{kernelsum} with respect to this inner product.
As shown in \cite{LP}, one has the expansion \eqref{introneedletexpansion} precisely when the family $\{ \psi^\mathsf{A}_{jk} \}$ is a Parseval frame of $\mathcal{H}$. 

Our strategy is thus to modify the needlets $\psi_{jk}$ such that they form a Parseval frame of $\mathcal{H}$. This can be achieved in explicit form by a transformation of their spherical harmonics representations using the power spectrum of the random field; the evaluation of the resulting functions is no more complicated than for standard spherical needlets. After collecting some preliminaries on spherical harmonics expansions in Section \ref{sec:sph}, the construction of the modified needlets $\psi^\mathsf{A}_{jk}$ is described in Section \ref{sec:needlets}. 

Whereas obtaining an expansion \eqref{introneedletexpansion} does not require any further assumptions on the power spectrum, the main issue lies in ensuring that decay properties of the form \eqref{localizationintro} still hold for $\psi^\mathsf{A}_{jk}$. 
As our main result, we show in Section \ref{sec:loc} that with some $C>0$, $\psi^\mathsf{A}_{jk}$ satisfies the angular decay estimate 
\begin{equation}\label{localizationmod}
    \abs{\psi^\mathsf{A}_{jk}(s)} \leq \frac{C 2^{- \beta j} }{ 1 + \bigl(2^j d(s, \xi_{jk})\bigr)^r  }, \quad s \in \bbS^2.
\end{equation}
Here $\beta>0$ and the exponent $r$ is restricted by the decay of the sequence $(\sqrt{A_\ell})_{\ell \in \N_0}$ and its forward differences, which are defined recursively by $\Delta_\ell^0 := \sqrt{A_\ell}$ and $\Delta_\ell^i := \Delta^{i-1}_{\ell+1} - \Delta^{i-1}_{\ell}$ for $i \in \N$. More precisely, we show that \eqref{localizationmod} holds provided that $\abs{\Delta^0_\ell} \to 0$ and that
\[
   \abs{\Delta^r_\ell} \leq C_r ( 1 + \ell)^{-(1 + \beta + r)}, \quad \ell \geq 0,
\]
for some $C_r>0$.
As shown below, this also implies that $ \abs{\Delta^i_\ell} \leq C_i ( 1 + \ell)^{-(1 + \beta + i)}$ for $i = 0,\ldots,r-1$ with $C_i>0$. Note that the parameter $\beta$ corresponds to the regularity of the realisations of the random field, whereas $r$ is related to the order up to which the decay of the differences of the $\sqrt{A_\ell}$ is consistent with the derivatives of the function $\ell \mapsto ( 1 + \ell )^{- ( 1 + \beta)}$.

In Section \ref{sec:num}, we provide numerical illustrations of the constructed expansions and on the dependence of their localisation properties on the power spectra of the random fields. {In addition, we study the approximation of the individual needlets by splines.}
In Section \ref{sec:pde}, we consider {two applications of the expansions: sparse polynomial approximations of elliptic PDEs with random coefficients on $\bbS^2$, and approximation and sampling of random fields by truncated expansions.}

{Let us note that our results can also be applied immediately in the context of the QMC methods in \cite{HS19a,HS19b,K19} that make use of localisation in random field expansions. Moreover, the expansions contructed here may be a suitable tool for the study of Besov regularity of realisations of random fields (complementing results on H\"older regularity in \cite{HLS18}), similarly to the role played by the expansion \eqref{levyciesielsky} of the Brownian bridge in \cite{Roy}.}

\begin{remark}
 Analogous results can be shown for random fields on $\bbS^n$ with $n>2$ by a similar adaptation of techniques in \cite{NPW}.  {The present work can also directly be extended to Gaussian random fields on product manifolds $[0,T]\times \bbS^2$ corresponding to an additional time dependence, provided that the covariance of the Gaussian field has product structure and is isotropic in the spherical variable.
 The extension to general random fields that are stationary in time and isotropic on the sphere, as considered in \cite{de2018regularity}, is left for future work.}
\end{remark}

\section{Karhunen-Lo\`eve Expansions of Random Fields on $\bbS^2$}\label{sec:sph}

In this section, we collect some basic facts on spherical harmonics and on their role in series expansions of random fields on $\bbS^2$; for further details, we refer to \cite{MP,LS}.
An important role in the definition of spherical harmonics is played by the \emph{associated Legendre functions} $P_{\ell m}$, which are defined as
\[
  P_{\ell m}(x) = (-1)^m (1-x^2)^{m/2} \frac{d^m}{dx^m} P_\ell(x),
\]
where $P_\ell$ is the $\ell$-th \emph{Legendre polynomial} with explicit expression
\[
  P_\ell(x) = \frac{1}{2^\ell \ell!} \frac{d^\ell}{dx^\ell} \bigl(x^2 - 1)^\ell\,.
\] 
The Legendre polynomials satisfy 
\begin{equation}\label{legendrenormalization}
\max_{x\in[-1,1]} \abs{P_\ell(x)} = 1,\quad \ell \in \N_0, 
\end{equation}
as well as
\[
   \int_{-1}^1 P_\ell(x)\, P_{\ell'}(x)\dx = \frac{2}{2\ell + 1}\delta_{\ell,\ell'}, \quad \ell, \ell' \in \N_0.
\]
In particular, note that $P_{\ell 0} = P_\ell$.

In what follows, for $s \in \bbS^2$, we use the convention
\begin{equation}\label{sphericalcoord}
  s = (\sin \vartheta \cos \varphi, \sin \vartheta \sin \varphi, \cos\vartheta) \quad\text{ with $\vartheta \in [0,\pi]$ and $\varphi \in [0,2\pi)$}
\end{equation}  
for writing $s$ in spherical coordinates.
The \emph{real-valued spherical harmonics} are now defined as
\[
\begin{aligned}
  Y_{\ell m}(s) &=  \sqrt{2} N_{\ell m}  P_{\ell m}(\cos \vartheta) \cos( m \varphi), \quad & m & = 1, \ldots, \ell, \\
  Y_{\ell 0}(s) &=  N_{\ell 0} P_{\ell}(\cos \vartheta),  & &\\
  Y_{\ell m}(s) &=  \sqrt{2} N_{\ell m} P_{\ell \abs{m}}(\cos \vartheta) \sin(\abs{m} \varphi),  & m & = -\ell, \ldots, -1,
\end{aligned}
\]
with the normalisation factors
\[
   N_{\ell m} := \sqrt{\frac{2\ell +1}{4\pi} \frac{(\ell- \abs{m})!}{(\ell + \abs{m})!} } \,.
\]
{Spherical harmonics can also be characterised as restrictions to the unit sphere $\bbS^2$ of real harmonic polynomials in $\mathbb{R}^3$ (see, e.g., \cite[Sec.~3.4, Proposition 3.33]{MP}).}

It is well known (see, e.g., \cite[Sec.~3.4]{MP}) that the family $\{ Y_{\ell m}\colon \ell \in \N\cup\{0\}, m \in \{-\ell, \ldots, \ell\} \}$ is an orthonormal basis of $L_2(\mathbb{S}^2)$. In particular,
\[
    f = \sum_{\ell =0}^\infty \sum_{m = -\ell}^\ell \left( \int_{\bbS^2} Y_{\ell m} \, f\dS \right) Y_{\ell m}\,, \qquad f \in L_2(\bbS^2),
\]
where we write $\sigma$ for the spherical measure with $\sigma(\bbS^2) = 4\pi$ corresponding to the surface area of the unit sphere.

In what follows, we frequently use the well-known fact that for any $\ell \in \N_0$,
\begin{equation}\label{Ysumlegendre}
 \sum_{m=-\ell}^\ell Y_{\ell m}(x) Y_{\ell m}(y) =
   \frac{2\ell + 1}{4\pi} P_\ell (x \cdot y), \qquad x, y \in \bbS^2.
\end{equation}

\begin{remark}
The \emph{complex-valued spherical harmonics} are 
\[
\begin{aligned}
\cY_{\ell m}(s) &= N_{\ell m} P_{\ell m}(\cos \vartheta) e^{\mathrm{i} m \varphi}, \quad &  m&\geq 0 , \\
\cY_{\ell m}(s) &= (-1)^m \overline{\cY}_{\ell,-m} (s),  & m &< 0.
\end{aligned}
\]
From this, one recovers the real-valued spherical harmonics by 
\[
\begin{aligned}
  Y_{\ell m} &:= \sqrt{2}\, \Re \cY_{\ell m} \text{ for $m = 1,\ldots, \ell$,} \\
  Y_{\ell, -m} &:= \sqrt{2}\, \Im \cY_{\ell m} \text{ for $m = 1,\ldots, \ell$,}
\end{aligned}
\] 
see also \cite[Rem.~3.24, Rem.~3.37]{MP}. 
Using $\cY_{\ell m} = (-1)^m \overline{\cY_{\ell,-m}}$, it is easy to see that
\begin{equation}\label{Ysumidentity}
\sum_{m=-\ell}^\ell \cY_{\ell m}(\xi_{jk}) \overline{ \cY_{\ell m} (s) } 
   =  \sum_{m=-\ell }^\ell Y_{\ell m}(\xi_{jk}) Y_{\ell m}(s) .
 \end{equation}
\end{remark}

The Karhunen-Lo\`eve expansion of a real-valued centred isotropic Gaussian random field $u$ on $\bbS^2$ is of the form
\begin{equation}\label{isotropicreal}
    u = \sum_{\ell = 0}^\infty \sqrt{A_\ell}  \sum_{m = -\ell}^\ell y_{\ell m}\, Y_{\ell m},\qquad y_{\ell,m}\sim \cN(0,1)\,\text{ i.i.d.,}
\end{equation}
with the real-valued spherical harmonics $Y_{\ell m}$.
Here the positive sequence $\mathsf{A} = (A_\ell)_{\ell \in \N_0}$, the power spectrum of $u$, is such that
 \begin{equation}\label{Asummability}
   \sum_{\ell=0}^\infty (2\ell + 1) A_\ell < \infty.
 \end{equation}
This representation was used, for instance, in \cite{KLG} for the discretization of stochastic differential equations on the sphere. The covariance function of $u$ is given, for $s, s' \in \bbS^2$, by
\begin{equation}\label{rhodef}
\begin{aligned}
 \rho(s \cdot s')  & := \EE[ u(s) \, u(s')] \\
  & = \sum_{\ell = 0}^\infty A_\ell \sum_{m = -\ell}^\ell Y_{\ell m}(s)\, Y_{\ell m}(s') = \sum_{\ell = 0}^\infty A_\ell  \frac{2\ell + 1}{4\pi} P_\ell (s \cdot s') ,
 \end{aligned}
\end{equation}
where we have used \eqref{Ysumlegendre}.
The condition \eqref{Asummability} implies that the integral operator on $L_2(\bbS^2)$ with kernel $\rho$,
\begin{equation}\label{covop}
\begin{aligned}
  T\colon L_2(\bbS^2) &\to L_2(\bbS^2),\\ f &\mapsto \left( s \mapsto \int_{\bbS^2} \rho(s\cdot s')\,f(s')\dS(s') 
 \right)
   \end{aligned}
\end{equation}
where 
\[
\int_{\bbS^2} \rho(s\cdot s')\,f(s')\dS(s') 
   =  \sum_{\ell=0}^\infty A_\ell \sum_{m=-\ell}^\ell  \left( \int_{\bbS^2} Y_{\ell m}(s') \,f(s')\dS(s')  \right) Y_{\ell m}(s) 
\]
 is a nuclear operator.
We also have $T\colon C(\bbS^2)' \to C(\bbS^2)$, since by \eqref{legendrenormalization} and \eqref{Asummability}, the series of continuous functions of $s,s'$ on the right side of \eqref{rhodef} converges uniformly, and thus the kernel $\rho$ is continuous.

\begin{remark}
In terms of complex-valued spherical harmonics, such random fields can alternatively be written in the form 
\begin{equation}\label{isotropicrf}
   u = \sum_{\ell = 0}^\infty \sum_{m=-\ell}^\ell a_{\ell m} \cY_{\ell m};
\end{equation}
here $a_{\ell m}$ are complex-valued Gaussian random variables where $a_{\ell m} \sim \cN(0, A_\ell /2)$ are independent for $m > 0$, $a_{\ell 0} \sim \cN(0,A_\ell)$ is real-valued and independent of the former, and $\Re a_{\ell m} = (-1)^m \Re a_{\ell ,-m}$, $\Im a_{\ell m} = (-1)^{m+1} \Im a_{\ell, -m }$; see \cite{MP,LS}.
\end{remark}

\begin{remark}\label{rem:regularity}
As shown in \cite[Thm.~4.6]{LS}, H\"older regularity of realisations holds under a slightly stronger condition than \eqref{Asummability}: if for a $\beta>0$,
\begin{equation}
\label{regularitysumcond}
   \sum_{\ell = 0}^\infty (1 + \ell)^{1 + 2\beta} A_\ell  < \infty,
\end{equation}
 then the random field has a modification that is in $C^{\gamma}(\bbS^2)$ for any $\gamma < \beta$.
\end{remark}

\section{Needlet expansions of random fields}\label{sec:needlets}

For many purposes, the notion of orthonormal bases is too restrictive. 
However, one can instead consider frames, where the basis functions are no longer required to be linearly independent. A family $\{ \varphi_i \}_{i \in \N}$ is called a frame for the Hilbert space $\mathcal{H}$ if
\[
    c \norm{ f }_{\mathcal{H}}^2 \leq \sum_{i=1}^\infty \abs{ \langle f, \varphi_i \rangle_{\mathcal{H}}}^2 \leq C \norm{f}_{\mathcal{H}}^2, \quad
     f \in \mathcal{H},
\]
for some $c,C>0$. If $c = C = 1$, the frame is called a Parseval frame, and one has 
\[
   f = \sum_{i=1}^\infty \langle f, \varphi_i\rangle_{\mathcal{H}} \, \varphi_i, \quad f \in \mathcal{H},
\]
with convergence in $\mathcal{H}$. 

The {needlet functions} that have been constructed in \cite{NPW} are a Parseval frame of $L_2(\bbS^2)$ and additionally satisfy localisation properties, that is, each needlet takes very small values sufficiently far away from a certain point on $\bbS^2$.
This is in contrast to the spherical harmonics, where each basis function has global oscillations on $\bbS^2$. One major advantage of basis functions $\varphi_i$ with localisation is that the coefficients $\langle f, \varphi_i\rangle_{\mathcal{H}}$ depend essentially only on local features of $f$. This means that one can use such frames for efficient adaptive approximation of functions that are smooth up to localised singularities.

The construction of spherical needlet frames relies crucially on suitable quadratures on the sphere. The existence of quadratures satisfying the following set of assumptions was shown in \cite{NPW}.

\begin{assumption}\label{assLambda}
For any $j \in \N_0$, let $\Lambda_j = \{ (\lambda_{j k}, \xi_{jk})\colon k = 1,\ldots, n_j\}$ with $n_j \in \N$, where $\lambda_{jk}>0$ and $\xi_{jk} \in \bbS^2$ are quadrature weights and points, respectively, such that
\begin{equation}\label{quadexactness}
\int_{\bbS^2} f \dS = \sum_{k=1}^{n_j} \lambda_{jk} f(\xi_{jk})
\quad\begin{array}{l} \text{for any polynomial $f$} \\ \text{of degree at most $2(2^{j} -1)$.} \end{array}
\end{equation}
In addition, let $n_j \leq C 2^{2j}$ with a $C>0$ independent of $j$, let
\begin{equation}\label{hdecrease}
   \frac{h_j}{4} \leq  h_{j+1} \leq \frac{h_j}{2} , \quad h_j = \sup_{s \in \bbS^2} \min \bigl\{ d(s, \xi_{jk})\colon k = 1,\ldots,n_j \bigr\},
\end{equation}
where $d$ denotes the geodesic distance on $\bbS^2$ as in \eqref{geodesic}, as well as
\begin{equation}\label{meshratio}
   h_j \leq \min_{k \neq k'} d(  \xi_{jk}, \xi_{jk'} ); 
\end{equation}
and let the corresponding quadrature weights satisfy
\begin{equation}\label{lambdabound}
  \lambda_{j k} \leq c 2^{-2j}
 \end{equation}
 with a $c > 0$ independent of $j$.
\end{assumption}

 For the set of indices of quadrature points and weights associated to such a quadrature, we write
 \[
   \mathcal{J} := \bigl\{  (j,k) \colon j \in \N_0, \, k = 1,\ldots, n_j  \bigr\}.
 \]
That the quadrature points can be chosen to satisfy \eqref{hdecrease} and \eqref{meshratio} {follows from \cite[Prop.~2.1]{NSWW07} (as restated in \cite[Prop.~2.1]{NPW}).}
The existence of corresponding positive weights satisfying \eqref{quadexactness} and \eqref{lambdabound} is a consequence of \cite[Cor.~4.4]{NPW}, {which improves on earlier results in \cite{MNW01}.}

\begin{remark}{
  Note that the quadrature points and weights are independent of the considered random field and can thus be precomputed in a numerical implementation. The proof of \cite[Prop.~2.1]{NSWW07} contains a procedure for finding suitable point sets that can in principle be realized numerically. However, each iteration of this algorithm requires finding a point on $\bbS^2$ having a fixed distance to the previously determined points, which exists by compactness, and therefore this scheme is not fully constructive. Once the quadrature points as in Assumption \ref{assLambda} are determined, corresponding weights can be computed by solving a quadratic programming problem as described in \cite[Sec.~4.3]{MNW01}. An alternative construction are \emph{spherical $t$-designs}, that is, sets of points on $\bbS^2$ for which the polynomial exactness property \eqref{quadexactness} up to degree $t$ holds with equal weights $\lambda_{jk} = 4\pi / n_j$. Such point sets have been computed numerically up to large $t$ in \cite{W} and have been used in the construction of standard spherical needlets in \cite{le2020isotropic}.}
\end{remark}

A second ingredient in the definition of spherical needlets is a suitable cutoff function $\kappa$ with the following properties.

\begin{assumption}\label{asskappa}
Let $\kappa \in C^r(\R)$ with $r > 2$ such that $\supp \kappa = [\frac12, 2]$ and 
\begin{equation}\label{eq:partunitycond}
  \abs{ \kappa(t) }^2  +  \abs{ \kappa(2t) }^2 = 1, \quad t \in [\textstyle\frac12\displaystyle, 1].
\end{equation}
\end{assumption}

{An example of such $\kappa$ is given in \eqref{eq:kappaexample}.}
With $\kappa$ as in Assumption \ref{asskappa}, let $b_j(t) = \kappa(2^{-j} (2t + 1))$ for $j\in \N_0$.
Following \cite{NPW}, the spherical needlets corresponding to $\kappa$ and $(\Lambda_j)_{j\in \N_0}$ are defined as
\begin{equation}\label{needletdef1}
 \psi_{j k} (s)
  = \sqrt{\lambda_{jk}} \sum_{\ell = 0}^\infty b_j(\ell)  \sum_{m=-\ell}^\ell Y_{\ell m}(\xi_{jk}) Y_{\ell m}(s) ,\quad (j,k) \in \mathcal{J}.
\end{equation}
By \eqref{Ysumlegendre}, we also have the simplified form
\begin{equation}\label{needletdef3}
   \psi_{jk} (s) = \sqrt{\lambda_{jk}} \sum_{\ell = 0}^\infty b_j(\ell)   \frac{2\ell + 1}{4\pi} P_\ell (s \cdot \xi_{jk}).
\end{equation}

\begin{remark}
 By \eqref{Ysumidentity}, the needlets can be rewritten in terms of complex spherical harmonics as
 \begin{equation}\label{needletdef2}
 \psi_{j k} (s) = \sqrt{\lambda_{jk}} \sum_{\ell = 0}^\infty b_j(\ell) \sum_{m=-\ell}^\ell \cY_{\ell m}(\xi_{jk}) \overline{ \cY_{\ell m} (s) }  \,,
\end{equation}
as in the original definition in \cite{NPW}.
\end{remark}

In addition to forming a Parseval frame, needlets also have vanishing moments; that is, as can be directly seen from the construction, they are orthogonal to spherical harmonics up to certain values of $\ell$, and thus to polynomials of the corresponding degrees. This is summarised in the following adaptation of \cite[Thm.~5.2]{NPW}. That the degree of exactness of the underlying quadrature stated in \eqref{quadexactness} suffices in our particular case can be seen from the discussion preceding this theorem in \cite[Sec.~5]{NPW}.

\begin{theorem}[\!\!{\cite[Thm.~5.2]{NPW}}]
\label{thm:needletframe}
Let Assumptions \ref{assLambda} and \ref{asskappa} hold.
Then the family $\{ \psi_{jk}  \}_{(j,k) \in \mathcal{J}}$ with $\psi_{jk}$ as in \eqref{needletdef1} is a Parseval frame of $L_2(\bbS^2)$. Moreover, $\psi_{jk}$ are orthogonal in $L_2(\bbS^2)$ between non-adjacent levels $j$ and to polynomials of degree less than $\frac12 (2^{j-1} -1)$  when $j  > 1$.
\end{theorem}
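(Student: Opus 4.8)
The plan is to reduce the whole statement to two structural facts: that the squared cutoffs $b_j^2$ form a partition of unity over the frequencies $\ell \in \N_0$, and that each needlet is a spherical polynomial whose interaction with the quadrature is governed by the exactness property \eqref{quadexactness}. First I would record the frequency localisation of the cutoffs. Since $\supp\kappa = [\frac12,2]$, the factor $b_j(\ell) = \kappa(2^{-j}(2\ell+1))$ is nonzero only for $\ell$ with $2^{j-1} \leq 2\ell+1 \leq 2^{j+1}$; in particular the sum in \eqref{needletdef1} is finite, so $\psi_{jk}$ is a spherical polynomial of degree at most $2^j-1$ and no convergence issues arise. A short computation using \eqref{eq:partunitycond}, together with $\kappa(2)=0$, shows that for every fixed $\ell\geq 0$ at most two consecutive values of $j$ contribute and that $\sum_{j=0}^\infty b_j(\ell)^2 = 1$.

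For the Parseval identity I would expand $f = \sum_{\ell,m}\hat f_{\ell m} Y_{\ell m}$ with $\hat f_{\ell m} = \langle f, Y_{\ell m}\rangle$ and introduce the frequency band $f_j := \sum_\ell b_j(\ell)\sum_m \hat f_{\ell m} Y_{\ell m}$, itself a polynomial of degree at most $2^j-1$. Inserting \eqref{needletdef1} and using orthonormality of the $Y_{\ell m}$ gives the key identity $\langle f, \psi_{jk}\rangle = \sqrt{\lambda_{jk}}\, f_j(\xi_{jk})$. Then $f_j^2$ has degree at most $2(2^j-1)$, so the exactness \eqref{quadexactness} converts the level-$j$ contribution into an integral:
\[
  \sum_{k=1}^{n_j} \abs{\langle f, \psi_{jk}\rangle}^2 = \sum_{k=1}^{n_j}\lambda_{jk}\, f_j(\xi_{jk})^2 = \int_{\bbS^2} f_j^2\dS = \sum_\ell b_j(\ell)^2\sum_m \abs{\hat f_{\ell m}}^2 .
\]
Summing over $j$ and inserting $\sum_j b_j(\ell)^2 = 1$ collapses the right-hand side to $\sum_{\ell,m}\abs{\hat f_{\ell m}}^2 = \norm{f}_{L_2(\bbS^2)}^2$, which is exactly the Parseval frame property.

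Both orthogonality statements then follow from the frequency localisation of $b_j$. Computing $\langle \psi_{jk},\psi_{j'k'}\rangle$ via orthonormality of the $Y_{\ell m}$ leaves a single $\ell$-sum weighted by $b_j(\ell)\,b_{j'}(\ell)$; since the supports of $b_j$ and $b_{j'}$ overlap only for $\abs{j-j'}\leq 1$ (the case $\abs{j-j'}=2$ touching only at the endpoint where $\kappa(2)=0$), every such inner product vanishes when the levels are non-adjacent. Likewise $b_j(\ell) = 0$ for $\ell \leq 2^{j-2}-1$, so $\psi_{jk}$ has no spherical-harmonic content of degree below $2^{j-2}$ and is therefore orthogonal to all polynomials of degree less than $\frac12(2^{j-1}-1) = 2^{j-2}-\frac12$ for $j>1$.

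The main obstacle I expect is purely the bookkeeping at the boundaries: matching the degree of $f_j^2$ precisely to the exactness threshold in \eqref{quadexactness}, and handling the lowest levels $j=0,1$ and the touching endpoints $2^{-j}(2\ell+1)=2$ so that the partition-of-unity sum and the support-disjointness claims hold \emph{exactly} rather than only up to harmless endpoint terms. The vanishing of $\kappa$ at the endpoints of its support is the single fact that makes all of these edge cases work out, and I would isolate it explicitly before carrying out the three main computations.
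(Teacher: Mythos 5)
Your proof is correct and is essentially the argument of \cite[Thm.~5.2]{NPW}, which the paper imports without giving a proof of its own: the identity $\langle f,\psi_{jk}\rangle_{L_2} = \sqrt{\lambda_{jk}}\,f_j(\xi_{jk})$, the exactness of the quadrature applied to $f_j^2$ (of degree at most $2(2^j-1)$), and the dyadic partition of unity $\sum_j b_j(\ell)^2 = 1$ are exactly the ingredients used there. The endpoint bookkeeping you flag --- the vanishing of $\kappa$ at $\frac12$ and $2$ and the matching of $\deg f_j^2$ to the exactness threshold in \eqref{quadexactness} --- is precisely the one point the paper itself singles out as requiring verification when adapting the cited theorem, so your treatment is complete.
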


What distinguishes needlet frames from other types of orthogonal bases or frames on the sphere are the strong localisation properties of the individual needlets. This localisation can be quantified in terms of the angle with respect to the point $\xi_{jk}$ associated to a given needlet $\psi_{jk}$ as follows.

\begin{theorem}[\!\!{{\cite[Cor.~5.3]{NPW}}}]
 Let Assumptions \ref{assLambda} and \ref{asskappa} hold. 
 Then there exists $C>0$ such that for $(j, k) \in \mathcal{J}$,
 \begin{equation}\label{localizationestimate}
  \abs{\psi_{jk}(s)} \leq  \frac{C 2^{j}}{1 + (2^j \theta)^r}
   \quad\text{for all $s \in \bbS^2$, with $\theta:=d (s, \xi_{jk}) \in [0,\pi]$,}
 \end{equation}
 {with $r$ as in Assumption \ref{asskappa}.}
\end{theorem}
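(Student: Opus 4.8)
The plan is to reduce the two–variable estimate \eqref{localizationestimate} to a bound on a one–dimensional kernel and then to treat the near–diagonal and off–diagonal regimes separately. By the simplified form \eqref{needletdef3} I would write $\psi_{jk}(s) = \sqrt{\lambda_{jk}}\,\Phi_j(\cos\theta)$ with $\theta = d(s,\xi_{jk})$ and
\[
   \Phi_j(x) := \sum_{\ell=0}^\infty b_j(\ell)\,\frac{2\ell+1}{4\pi}\,P_\ell(x), \qquad b_j(\ell) = \kappa\bigl(2^{-j}(2\ell+1)\bigr).
\]
Since $\supp\kappa = [\frac12,2]$, the coefficient $b_j(\ell)$ is nonzero only for those $\ell$ with $2^{j-1}\le 2\ell+1\le 2^{j+1}$, so the sum is finite and runs over $O(2^j)$ indices $\ell$ of size $\ell \sim 2^j$. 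By \eqref{lambdabound} one has $\sqrt{\lambda_{jk}}\le \sqrt{c}\,2^{-j}$, so it suffices to prove $\bigabs{\Phi_j(\cos\theta)} \le C\,2^{2j}\bigl(1 + (2^j\theta)^r\bigr)^{-1}$ for $\theta\in[0,\pi]$.

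For the near–diagonal regime $2^j\theta\le 1$ I would use the trivial bound: by \eqref{legendrenormalization} one has $\abs{P_\ell}\le 1$, and with $\abs{b_j(\ell)}\le 1$, $2\ell+1\sim 2^{j+1}$ and $O(2^j)$ nonzero terms this gives $\abs{\Phi_j(\cos\theta)}\le C 2^{2j}$; since $1+(2^j\theta)^r\le 2$ here, this is already the asserted bound on this set.

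The heart of the argument is the off–diagonal decay for $2^j\theta>1$, which I would obtain by repeated summation by parts exploiting the smoothness of $\kappa$. Writing $c_\ell := b_j(\ell)$ and using the three–term recurrence $(\ell+1)P_{\ell+1}(x)=(2\ell+1)xP_\ell(x)-\ell P_{\ell-1}(x)$, one checks after reindexing that
\[
   (1-x)\,\Phi_j(x) = \frac{1}{4\pi}\sum_{\ell} \Bigl[\ell\,(c_\ell-c_{\ell-1}) - (\ell+1)(c_{\ell+1}-c_\ell)\Bigr] P_\ell(x).
\]
The new coefficients form a second–order difference of $(c_\ell)$, which, since $c_\ell = \kappa(2^{-j}(2\ell+1))$ with $\kappa\in C^r$, is of size $O(2^{-j})$ by the chain rule (each finite difference contributes a factor $2^{-j}$). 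Iterating this identity $p$ times reduces the coefficient size to $O(2^{-(2p-1)j})$ while keeping $O(2^j)$ terms, so that $\abs{(1-x)^p\Phi_j(x)}\lesssim 2^{2j(1-p)}$; using $1-\cos\theta = 2\sin^2(\theta/2)\ge \frac{2}{\pi^2}\theta^2$ then yields $\abs{\Phi_j(\cos\theta)}\lesssim 2^{2j}(2^j\theta)^{-2p}$. Combining with the near–diagonal bound and multiplying by $\sqrt{\lambda_{jk}}$ gives \eqref{localizationestimate}.

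The main obstacle is to reach the decay order $r$ exactly matching $\kappa\in C^r$ (rather than the even order $2\lfloor r/2\rfloor$ produced by the crude iteration above) and to keep all constants uniform in $j$. To sharpen the exponent I would replace the factor $(1-\cos\theta)$ by a first–order summation by parts based on the telescoping identity $\sum_{\ell\le L}(2\ell+1)P_\ell(x) = P_L'(x)+P_{L+1}'(x)$ (which follows from $(2\ell+1)P_\ell = P_{\ell+1}'-P_{\ell-1}'$), together with the classical Szeg\H{o} bounds for Legendre polynomials and their derivatives, e.g.\ $\abs{P_\ell(\cos\theta)}\lesssim (\ell\sin\theta)^{-1/2}$ on $(0,\pi)$; performing Abel summation against the differences $c_\ell-c_{\ell+1}=O(2^{-j})$ then converts each order of smoothness of $\kappa$ into one power of $2^j\theta$. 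Two further technical points require care: the degeneracy of these asymptotics as $\theta\to\pi$, which is handled separately using $P_\ell(-x)=(-1)^\ell P_\ell(x)$ or simply absorbed into the constant on the region $\theta\gtrsim 1$ where any fixed positive decay order already suffices; and the uniform control of the finite differences of $b_j$, including the boundary terms in the summation by parts, which vanish thanks to the compact support of $\kappa$.
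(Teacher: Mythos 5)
Your route is genuinely different from the one the paper relies on. The paper imports this estimate from \cite{NPW} and, for the modified needlets, reproduces that strategy in Lemma \ref{lmm:Ctilde} and the proof of Theorem \ref{modifieddecay}: the Mehler--Dirichlet integral representation of $P_\ell(\cos\theta)$ followed by Poisson summation and decay of the Fourier transform of $t\mapsto t\,\kappa(\varepsilon t)\alpha(t)$; the present theorem is exactly the case $\alpha\equiv 1$ of that machinery. Your alternative --- iterated multiplication by $(1-x)$ via the three-term recurrence, converting smoothness of $\kappa$ into smallness of coefficient differences --- is more elementary, and its core is correct: the identity for $(1-x)\Phi_j$ checks out, the coefficients after $p$ iterations are $O(2^{(1-2p)j})$ provided $\kappa\in C^{2p}$, and together with $1-\cos\theta\ge 2\theta^2/\pi^2$ (valid on all of $[0,\pi]$) and the trivial near-diagonal bound this proves \eqref{localizationestimate} with $r$ replaced by any even integer $2p\le r$. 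What this buys is a proof using only the recurrence and $\abs{P_\ell}\le 1$, at the price of losing one power of decay when $r$ is odd.

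That loss is where the genuine gap lies, since Assumption \ref{asskappa} permits odd $r$ and the theorem claims order $r$ exactly. Your two proposed repairs do not close it as written. First, you cannot absorb the region $\theta\gtrsim 1$ into the constant with ``any fixed positive decay order'': for $\theta$ bounded below, the asserted bound reads $\abs{\psi_{jk}(s)}\lesssim \theta^{-r}2^{(1-r)j}$, so establishing only order $r'<r$ there loses a factor $2^{(r-r')j}$ that is unbounded in $j$; the full order $r$ is needed uniformly up to $\theta=\pi$. Second, the refinement via $(2\ell+1)P_\ell=P'_{\ell+1}-P'_{\ell-1}$ and the Szeg\H{o} bound $\abs{P_\ell(\cos\theta)}\lesssim(\ell\sin\theta)^{-1/2}$ degenerates precisely as $\theta\to\pi$, and the reflection $P_\ell(-x)=(-1)^\ell P_\ell(x)$ does not rescue it: after reflecting, the coefficients become $(-1)^\ell b_j(\ell)$, whose finite differences are $O(1)$ rather than $O(2^{-j})$, so summation by parts gains nothing near the antipode. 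Moreover, one Abel summation against the Szeg\H{o} bounds gains only half a power of $2^j\theta$ per step, so the claimed conversion of one derivative of $\kappa$ into one power of $2^j\theta$ is not actually established. To finish, you would need either the Ces\`aro-kernel summation-by-parts machinery of Dai and Xu, or simply the Fourier-analytic argument of Lemma \ref{lmm:Ctilde} with $\alpha\equiv 1$, which treats all integer $r\ge 2$ and all $\theta\in[0,\pi]$ simultaneously.
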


For a given centred Gaussian random field $u$, we now transform the standard needlets $\psi_{jk}$ to modified needlets $ \psi^\mathsf{A}_{jk}$ such that 
\begin{equation}\label{modneedletexpansion}
  u = \sum_{(j,k) \in \mathcal{J}} y_{j,k}  \psi^\mathsf{A}_{jk},\quad
  \text{$y_{j,k} \sim \cN(0,1)$ i.i.d.}
\end{equation}
As the following result shows, such a series representation with uncorrelated standard Gaussian coefficients holds precisely when $( \psi^\mathsf{A}_{jk})_{(j,k)\in\mathcal{J}}$ is a Parseval frame of the reproducing kernel Hilbert space associated to the random field.

\begin{theorem}[{\!\!\cite[{Thm.~1}]{LP}}]
\label{thm:frame vs expansion}
Let $u$ be a centred Gaussian random field with reproducing kernel Hilbert space $\mathcal{H}$. The following are equivalent for $\Phi:=\{ \varphi_i \}_{i \in \N}$ with $\varphi_i \in \mathcal{H}$:
\begin{enumerate}[{\upshape (i)}]
\item The family $\Phi$ is a Parseval frame of $\mathcal{H}$, that is, 
\[
   \norm{ f}_{\mathcal{H}}^2 = \sum_{ i =1 }^\infty \abs{\langle f, \varphi_i \rangle}^2\quad\text{for all $f \in \mathcal{H}$.}
\]
\item One has the representation
\[
   u = \sum_{i = 1}^\infty y_i \varphi_i
\]
where $y_i \sim \cN(0,1)$ are independent identically distributed.
\end{enumerate}
\end{theorem}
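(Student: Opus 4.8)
The plan is to reduce both (i) and (ii) to a single pointwise identity for the covariance kernel and to exploit the reproducing property of $\cH$. First I would record that $\cH$ is genuinely a reproducing kernel Hilbert space of functions on $\bbS^2$: since $\abs{f(s)} = \abs{\langle f,\rho_s\rangle_\cH} \le \sqrt{\rho(1)}\,\norm{f}_\cH$, point evaluations are continuous, the kernel is $(s,s')\mapsto\rho(s\cdot s')$, and $f(s) = \langle f,\rho_s\rangle_\cH$ for every $f\in\cH$; in particular $\varphi_i(s) = \langle\varphi_i,\rho_s\rangle_\cH$. I would also note the canonical isometry extending $\rho_s\mapsto u(s)$ from $\cH$ onto the closed linear span of $\{u(s)\colon s\in\bbS^2\}$ in $L_2(\Omega)$, valid because $\langle\rho_s,\rho_{s'}\rangle_\cH = \rho(s\cdot s') = \EE[u(s)u(s')]$; this explains why frames of $\cH$ and white-noise expansions of $u$ should correspond.

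The pivot would be the observation that each of (i) and (ii) is equivalent to the single identity
\begin{equation}
  \rho(s\cdot s') = \sum_{i=1}^\infty \varphi_i(s)\,\varphi_i(s'), \qquad s,s'\in\bbS^2.
  \tag{$\star$}
\end{equation}
On the one hand, inserting $f=\rho_s$ and $g=\rho_{s'}$ into the polarised Parseval identity $\langle f,g\rangle_\cH=\sum_i\langle f,\varphi_i\rangle_\cH\langle\varphi_i,g\rangle_\cH$ and using the reproducing property turns (i) into $(\star)$, the reverse implication being the density computation in the second direction below. On the other hand, for i.i.d.\ $y_i\sim\cN(0,1)$ the centred Gaussian field $\sum_i y_i\varphi_i$ has covariance $\sum_i\varphi_i(s)\varphi_i(s')$, which coincides with $\rho(s\cdot s')$ exactly when $(\star)$ holds.

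For (i)$\Rightarrow$(ii) I would apply Parseval to $f=\rho_s$ to obtain $\sum_i\abs{\varphi_i(s)}^2 = \norm{\rho_s}_\cH^2 = \rho(1) < \infty$, which both yields $(\star)$ by polarisation and guarantees $L_2(\Omega)$-convergence of $\sum_i y_i\varphi_i(s)$ for each fixed $s$. The field $v := \sum_i y_i\varphi_i$ is then centred Gaussian (finite sums are Gaussian, and $L_2(\Omega)$-limits of Gaussian vectors remain Gaussian) with covariance $\rho$, hence has the law of $u$. For (ii)$\Rightarrow$(i) I would read off $(\star)$ from the covariance of $u=\sum_i y_i\varphi_i$ and then verify the Parseval identity on the dense subspace $\linspan\{\rho_s\colon s\in\bbS^2\}$ by the direct computation $\sum_i\abs{\langle\sum_k\alpha_k\rho_{s_k},\varphi_i\rangle_\cH}^2 = \sum_{k,l}\alpha_k\alpha_l\,\rho(s_k\cdot s_l) = \norm{\sum_k\alpha_k\rho_{s_k}}_\cH^2$, extending it to all of $\cH$ by the standard density argument for isometries into $\ell_2$.

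I expect the main obstacle to be conceptual rather than computational: making precise in what sense ``$u=\sum_i y_i\varphi_i$'' holds. When the frame is redundant, the $y_i$ cannot be deterministic functionals of $u$, so the correct reading is that the $y_i$ are auxiliary i.i.d.\ variables from which a field with the law of $u$ is synthesised; equality then means equality in law, or almost sure equality after building $u$ from the $y_i$ on a suitable enlarged space. Justifying convergence of the series in the appropriate topology is the accompanying technical point, and here the summability $\sum_i\varphi_i(s)^2 = \rho(1)$ furnished by Parseval is precisely what is needed. In the converse direction, the only subtlety is that the Parseval identity is obtained directly only on $\linspan\{\rho_s\}$ and must be promoted to $\cH$ via density and the isometry property, rather than by invoking any frame inequality.
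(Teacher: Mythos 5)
Your proposal is essentially correct, but note that the paper itself gives no proof of this statement: it is quoted verbatim from the reference \cite{LP}, so there is no in-text argument to compare against. Your reduction of both (i) and (ii) to the kernel identity $\rho(s\cdot s')=\sum_i\varphi_i(s)\varphi_i(s')$ is the standard route, and each individual step is sound: Parseval applied to $f=\rho_s$, $g=\rho_{s'}$ together with the reproducing property gives one direction; the covariance computation for $\sum_i y_i\varphi_i$ (with $L_2(\Omega)$-convergence secured by $\sum_i\varphi_i(s)^2=\norm{\rho_s}_{\cH}^2=\rho(1)<\infty$) gives the other; and the density-plus-isometry argument correctly promotes Parseval from $\linspan\{\rho_s\}$ to all of $\cH$, since the coordinate functionals $f\mapsto\langle f,\varphi_i\rangle_{\cH}$ are continuous and therefore agree with the coordinates of the extended isometry into $\ell_2$.

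The one place where your argument stops short of the cited result is the point you yourself flag: in what sense ``$u=\sum_i y_i\varphi_i$'' holds. Your proof establishes equality in law, i.e.\ that a field synthesised from i.i.d.\ $y_i$ and a Parseval frame has the law of $u$. The sharper content of the theorem in \cite{LP} is that for a \emph{redundant} frame one can still realise the representation almost surely, with genuinely independent standard Gaussians $y_{i}$ constructed on an enlargement of the probability space; this requires the analysis operator $T\colon\cH\to\ell_2$, the observation that the variables $\langle u,\cdot\rangle$ ``pulled back'' through $T$ are Gaussian with covariance given by the orthogonal projection onto $\ran T$, and the addition of an independent white noise on $(\ran T)^\perp$ to whiten them. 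Acknowledging this without carrying it out is acceptable for the equivalence as used in the paper (where only the distributional identity is exploited), but it is the part of the proof that cannot be dismissed as routine.
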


Such a Parseval frame of the reproducing kernel Hilbert space associated to the random field can be obtained by applying a suitable factorization of its covariance operator to a frame in a reference Hilbert space, which in our case will be $L_2(\bbS^2)$.

\begin{prop}[{\!\!\cite[Prop.~1]{LP}}]
\label{prop:factorization}
Let $u$ be a Gaussian random field with realisations in the separable Banach space $\mathcal{E}$ with covariance operator $T\colon \mathcal{E}'\to \mathcal{E}$. If $T = S S'$ with $S \colon \mathcal{K}\to \mathcal{E}$, where $\mathcal{K}$ is a separable Hilbert space, and if $\{ \varphi_i \}_{i \in \N}$ is a Parseval frame of $\mathcal{K}$, then $\{ S \varphi_i \}_{i \in \N}$ is a Parseval frame of the reproducing kernel Hilbert space of $u$.
\end{prop}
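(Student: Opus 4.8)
The plan is to reduce the claim to the standard operator-theoretic description of the reproducing kernel (Cameron--Martin) space, together with the elementary fact that co-isometries send Parseval frames to Parseval frames. Recall that $\mathcal{H}$ embeds continuously into $\mathcal{E}$ via an inclusion $\iota\colon \mathcal{H}\to\mathcal{E}$, that the covariance operator factors as $T = \iota\,\iota^{*}$, where $\iota^{*}\colon\mathcal{E}'\to\mathcal{H}$ is the Hilbert-space adjoint of $\iota$ (using the Riesz identification $\mathcal{H}'\cong\mathcal{H}$), and that the reproducing property $\langle \iota^{*}\xi, h\rangle_{\mathcal{H}} = \langle \xi, \iota h\rangle_{\mathcal{E}',\mathcal{E}}$ holds for $\xi\in\mathcal{E}'$, $h\in\mathcal{H}$, with $\{\iota^{*}\xi : \xi\in\mathcal{E}'\}$ dense in $\mathcal{H}$. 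Against this background it suffices to show that $S$ maps $\mathcal{K}$ into $\mathcal{H}$ and that, regarded as an operator $S\colon\mathcal{K}\to\mathcal{H}$, it is a co-isometry, i.e.\ $S S^{*} = \mathrm{id}_{\mathcal{H}}$ for the Hilbert adjoint $S^{*}\colon\mathcal{H}\to\mathcal{K}$.

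First I would establish $S(\mathcal{K})\subseteq\mathcal{H}$. Using the Banach adjoint $S'\colon\mathcal{E}'\to\mathcal{K}'\cong\mathcal{K}$ and $T = SS'$, Cauchy--Schwarz in $\mathcal{K}$ gives for every $k\in\mathcal{K}$ and $\xi\in\mathcal{E}'$
\begin{equation*}
 \abs{\langle \xi, Sk\rangle_{\mathcal{E}',\mathcal{E}}}^{2} = \abs{\langle S'\xi, k\rangle_{\mathcal{K}}}^{2} \leq \norm{k}_{\mathcal{K}}^{2}\,\langle S'\xi, S'\xi\rangle_{\mathcal{K}} = \norm{k}_{\mathcal{K}}^{2}\,\langle \xi, T\xi\rangle_{\mathcal{E}',\mathcal{E}}.
\end{equation*}
By the variational characterisation of $\mathcal{H}$ (an element $e\in\mathcal{E}$ lies in $\mathcal{H}$ with $\norm{e}_{\mathcal{H}}^2$ equal to the least constant $C$ such that $\abs{\langle\xi,e\rangle}^2\le C\langle\xi,T\xi\rangle$ for all $\xi$) this yields $Sk\in\mathcal{H}$ with $\norm{Sk}_{\mathcal{H}}\le\norm{k}_{\mathcal{K}}$. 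Comparing $\iota\,\iota^{*} = T = SS'$ and using injectivity of $\iota$, I then read off the identity $\iota^{*} = SS'$ as maps $\mathcal{E}'\to\mathcal{H}$.

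Next I would identify $S^{*}$ on the dense set $\iota^{*}(\mathcal{E}')$. For $\xi\in\mathcal{E}'$ and $k\in\mathcal{K}$, the reproducing property together with $\langle \xi, Sk\rangle_{\mathcal{E}',\mathcal{E}} = \langle S'\xi, k\rangle_{\mathcal{K}}$ gives $\langle S^{*}\iota^{*}\xi, k\rangle_{\mathcal{K}} = \langle \iota^{*}\xi, Sk\rangle_{\mathcal{H}} = \langle S'\xi, k\rangle_{\mathcal{K}}$, whence $S^{*}\iota^{*} = S'$. Combining this with $\iota^{*} = SS'$ yields $SS^{*}\iota^{*}\xi = SS'\xi = \iota^{*}\xi$, so that $SS^{*}$ agrees with $\mathrm{id}_{\mathcal{H}}$ on a dense subspace and hence, by boundedness, on all of $\mathcal{H}$. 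Thus $S^{*}$ is an isometry and $S\colon\mathcal{K}\to\mathcal{H}$ is a co-isometry.

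Finally, the conclusion is a one-line frame computation: for any $h\in\mathcal{H}$,
\begin{equation*}
 \sum_{i} \abs{\langle h, S\varphi_i\rangle_{\mathcal{H}}}^{2} = \sum_{i} \abs{\langle S^{*}h, \varphi_i\rangle_{\mathcal{K}}}^{2} = \norm{S^{*}h}_{\mathcal{K}}^{2} = \norm{h}_{\mathcal{H}}^{2},
\end{equation*}
where the second equality uses that $\{\varphi_i\}$ is a Parseval frame of $\mathcal{K}$ and the third the isometry of $S^{*}$. I expect the main obstacle to be the second and third steps, namely the careful bookkeeping needed to pass between the Banach adjoint $S'$ and the Hilbert adjoint $S^{*}$ and to verify that $S$ genuinely ranges in $\mathcal{H}$ as a co-isometry; once these identifications are in place the frame property is immediate.
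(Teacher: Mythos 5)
The paper does not prove this proposition itself; it is imported verbatim from \cite[Prop.~1]{LP}, so there is no in-paper argument to compare against. Your proof is correct and follows the standard route taken in that reference: factor the covariance through the Cameron--Martin embedding $\iota$, use the variational characterisation of $\mathcal{H}$ to see that $S$ maps $\mathcal{K}$ boundedly into $\mathcal{H}$, identify $S^{*}\iota^{*}=S'$ on the dense range of $\iota^{*}$ to conclude $SS^{*}=\mathrm{id}_{\mathcal{H}}$, and then observe that a co-isometry carries Parseval frames to Parseval frames.
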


This use of factorizations can be regarded as the generalisation of the observation that if a symmetric positive semidefinite matrix $M \in \R^{n\times n}$ has the factorization $M = L L^T$, then for $z \sim \mathcal{N}(0, I)$ one has $L z \sim \mathcal{N}(0,M)$.
In the case of $T$ as in \eqref{covop}, a factorization as in Proposition \ref{prop:factorization} can immediately be obtained from \eqref{covop}: since $T$ is diagonal in the basis of spherical harmonics, we have $T = S^2$ with
\begin{equation}\label{Sdef}
   S f := \sum_{\ell = 0}^\infty \sqrt{A_\ell} \sum_{m = -\ell}^\ell \left(  \int_{\bbS^2} Y_{\ell m} f \dS  \right)  Y_{\ell m}\,.
\end{equation}
Here we can apply Proposition \ref{prop:factorization} to $T\colon C(\bbS^2)' \to C(\bbS^2)$ and $S\colon L_2(\bbS^2)\to C(\bbS^2)$.

We define the following modified needlets, which are a Parseval frame of the reproducing kernel Hilbert space of the random field $u$ in \eqref{isotropicrf}:
For a given power spectrum $\mathsf{A}$, we now define modified needlets by
\begin{equation}\label{modifiedneedlets}
 \psi^\mathsf{A}_{jk}(s) := \sqrt{\lambda_{jk}} K^\mathsf{A}_j (s\cdot \xi_{jk}) , 
  \quad K^\mathsf{A}_j (s\cdot \xi_{jk}) := \sum_{\ell = 0}^\infty b_j(\ell)  \sqrt{A_\ell} \frac{2\ell + 1}{4\pi} P_\ell (s\cdot \xi_{jk}) .
\end{equation}
{Note that due to the definition of $b_j$ in terms of $\kappa$ from Assumptions \ref{asskappa}, the summation over $\ell$ is finite.}
It is now straightforward to verify that \eqref{modneedletexpansion} holds for these functions.

\begin{theorem}\label{thm:modframe}
  Let Assumptions \ref{assLambda} and \ref{asskappa} hold.
  Then $\{{\psi}^\mathsf{A}_{j,k} \colon j\geq 0, k = 1,\ldots, n_j \}$ is a Parseval frame of the reproducing kernel Hilbert space of the Gaussian random field $u$ in \eqref{isotropicreal} with power spectrum $\mathsf{A}$ satisfying \eqref{Asummability}. 
  The functions $\psi^\mathsf{A}_{jk}$ are orthogonal in $L_2(\bbS^2)$ between non-adjacent levels $j$ and to polynomials of degree less than $\frac12 (2^{j-1} -1)$ when $j  > 1$.
\end{theorem}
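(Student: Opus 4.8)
The plan is to recognise the modified needlets as the images of the standard needlets under the operator $S$ from \eqref{Sdef}, and then to combine the factorization result (Proposition \ref{prop:factorization}) with the fact that the standard needlets are a Parseval frame of $L_2(\bbS^2)$ (Theorem \ref{thm:needletframe}). First I would verify the identity $\psi^\mathsf{A}_{jk} = S\psi_{jk}$. Starting from the spherical-harmonics representation \eqref{needletdef1} and applying $S$, which multiplies the coefficient of each $Y_{\ell m}$ by $\sqrt{A_\ell}$, gives $S\psi_{jk} = \sqrt{\lambda_{jk}}\sum_{\ell} b_j(\ell)\sqrt{A_\ell}\sum_{m} Y_{\ell m}(\xi_{jk}) Y_{\ell m}(s)$; using the addition formula \eqref{Ysumlegendre} this is exactly $\psi^\mathsf{A}_{jk}$ in the form \eqref{modifiedneedlets}. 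Hence $\psi^\mathsf{A}_{jk} = S\psi_{jk}$ for every $(j,k)\in\mathcal{J}$.

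Second, I would apply Proposition \ref{prop:factorization} with reference Hilbert space $\mathcal{K} = L_2(\bbS^2)$, Banach space $\mathcal{E} = C(\bbS^2)$, and the factorization $T = S S'$ of the covariance operator set up after \eqref{Sdef}. By Theorem \ref{thm:needletframe} the standard needlets $\{\psi_{jk}\}$ are a Parseval frame of $L_2(\bbS^2)$, so the proposition immediately yields that $\{S\psi_{jk}\} = \{\psi^\mathsf{A}_{jk}\}$ is a Parseval frame of the reproducing kernel Hilbert space $\mathcal{H}$ of $u$; this is the first assertion, and it also establishes the expansion \eqref{modneedletexpansion} via Theorem \ref{thm:frame vs expansion}. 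The step requiring care here is checking the hypotheses of Proposition \ref{prop:factorization} in the Banach-space setting: that $S$ maps $L_2(\bbS^2)$ boundedly into $C(\bbS^2)$, and that $T = SS'$ as operators $C(\bbS^2)' \to C(\bbS^2)$. Boundedness follows from the summability condition \eqref{Asummability} by a Cauchy--Schwarz estimate together with $\sum_{m} \abs{Y_{\ell m}(s)}^2 = (2\ell+1)/(4\pi)$ (the diagonal case $x=y=s$ of \eqref{Ysumlegendre}), which shows that the series defining $Sf$ converges uniformly and so defines a continuous function; the identity $T = SS'$ is then immediate from the fact that both operators act diagonally in the spherical-harmonics basis, with $T$ scaling by $A_\ell$ and $S$ by $\sqrt{A_\ell}$.

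Third, the $L_2$-orthogonality properties are inherited directly from the frequency localisation of $b_j$, exactly as for the standard needlets. Since $\psi^\mathsf{A}_{jk}$ and $\psi_{jk}$ differ only by the positive factor $\sqrt{A_\ell}$ in the coefficient of each $Y_{\ell m}$, they involve the same set of active degrees $\ell$, namely those with $b_j(\ell)\neq 0$; from $\supp\kappa = [\frac12,2]$ in Assumption \ref{asskappa} this forces $2^{-j}(2\ell+1)\in[\frac12,2]$, i.e. $\frac12(2^{j-1}-1) \le \ell \le 2^j - 1$. Consequently the active-degree sets for levels $j$ and $j'$ with $\abs{j-j'}\ge 2$ are disjoint, which gives orthogonality between non-adjacent levels, and every active degree satisfies $\ell \ge \frac12(2^{j-1}-1)$ when $j>1$, which gives orthogonality to all spherical harmonics, and hence all polynomials, of degree less than $\frac12(2^{j-1}-1)$.

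I expect the only genuinely delicate point to be the rigorous justification of the hypotheses of Proposition \ref{prop:factorization} in the nonreflexive space $C(\bbS^2)$ — in particular the correct interpretation of $S'$ on $C(\bbS^2)'$ and the verification of $T = SS'$ there — while the computation $\psi^\mathsf{A}_{jk} = S\psi_{jk}$ and the frequency-support argument for the orthogonality claims are routine.
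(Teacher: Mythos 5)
Your proposal is correct and follows essentially the same route as the paper: identify $\psi^\mathsf{A}_{jk} = S\psi_{jk}$ with $S$ as in \eqref{Sdef}, invoke Proposition \ref{prop:factorization} together with the Parseval frame property of the standard needlets from Theorem \ref{thm:needletframe}, and inherit the $L_2$-orthogonality from the frequency support of $b_j$. The paper states these steps more tersely (the factorization $T=S^2$ with $S\colon L_2(\bbS^2)\to C(\bbS^2)$ is set up just before the theorem), but your added verifications of the hypotheses and of the support computation are accurate and fill in exactly what the paper leaves implicit.
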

  
\begin{proof}
 This follows immediately from Proposition \ref{prop:factorization}, since $\psi^\mathsf{A}_{jk} = S \psi_{jk}$ with $S$ as in \eqref{Sdef}, and the $\psi_{jk}$ form a tight frame of $L_2(\bbS^2)$ by Theorem \ref{thm:needletframe}.
 The further properties carry over directly from Theorem \ref{thm:needletframe}.
\end{proof}

As a consequence of Theorems \ref{thm:frame vs expansion} and \ref{thm:modframe}, we obtain the expansion \eqref{modneedletexpansion} with independent scalar coefficients of the isotropic Gaussian random field $u$ as in \eqref{isotropicreal} with power spectrum $\mathsf{A}$ such that \eqref{Asummability} holds.
We now turn to the more involved question under which conditions and to what extent the localisation properties \eqref{localizationestimate} of the standard needlets $\psi_{jk}$ are preserved in the modified needlets $\psi^\mathsf{A}_{jk}$.

\section{Localisation properties}
\label{sec:loc}

We now give a sufficient condition for the localisation properties analogous to \eqref{localizationestimate} of the modified needlets $\psi^\mathsf{A}_{jk}$ for the power spectrum $\mathsf{A}=(A_\ell)_{\ell\geq 0}$. It involves the forward differences of $\sqrt{A_\ell}$, which are defined recursively by
\begin{equation}\label{Aelldiff}
   \Delta_\ell^0 := \sqrt{A_\ell}, \qquad 
   \Delta_\ell^i := \Delta^{i-1}_{\ell+1} - \Delta^{i-1}_{\ell}
\end{equation}
for $i \in \N$ and $\ell \in \N_0$.

\begin{theorem}\label{modifieddecay}
 Let Assumptions \ref{assLambda} and \ref{asskappa} hold. In addition, let $\mathsf{A}=(A_\ell)_{\ell \in \N_0}$ satisfying \eqref{Asummability} be such that for some $\beta >0$ and $c_r>0$, 
 \begin{equation}\label{diffcondthm}
   \bigabs{\Delta_\ell^r } \leq c_r ( 1 + \ell)^{-(1+\beta + r) } \quad\text{for all $\ell \in \N_0$,}
 \end{equation}
 with $r$ as in Assumption \ref{asskappa}.
 Then for $j \in \N_0$, $k = 1,\ldots,n_j$, 
 \begin{equation}\label{modlocalizationestimate}
  \abs{\psi^\mathsf{A}_{jk}(s)} \leq  \frac{C 2^{-\beta j}}{1 + (2^j \theta)^r}
   \quad\text{for all $s \in \bbS^2$, where $\theta:=d (s, \xi_{jk}) \in [0,\pi]$,}
 \end{equation}
 with $C>0$ independent of $j$, $k$, and $s$.
\end{theorem}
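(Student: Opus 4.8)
The plan is to reduce everything to the scalar kernel $K^\mathsf{A}_j$ and to an abstract localisation estimate for Legendre kernels whose multiplier sequence has controlled finite differences. Since $\psi^\mathsf{A}_{jk}(s) = \sqrt{\lambda_{jk}}\, K^\mathsf{A}_j(s\cdot\xi_{jk})$ with $\sqrt{\lambda_{jk}}\lesssim 2^{-j}$ by \eqref{lambdabound}, writing $\theta = d(s,\xi_{jk})$ it suffices to prove $\bigabs{K^\mathsf{A}_j(\cos\theta)} \leq C\,2^{(1-\beta)j}\bigl(1+(2^j\theta)^r\bigr)^{-1}$. I would isolate the following core statement: for a sequence $(a_\ell)$ supported on $\ell\in[c\,2^j, C\,2^j]$ and satisfying $\abs{\Delta^i a_\ell}\leq M\,2^{-ij}$ for $0\leq i\leq r$, the kernel $\sum_\ell a_\ell\frac{2\ell + 1}{4\pi}P_\ell(\cos\theta)$ is bounded by $C\,M\,2^{2j}\bigl(1+(2^j\theta)^r\bigr)^{-1}$. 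I then apply this with $a_\ell = b_j(\ell)\sqrt{A_\ell}$, whose support automatically satisfies $\ell\sim 2^j$ because $\supp\kappa=[\tfrac12,2]$, and for which the correct size turns out to be $M\sim 2^{-(1+\beta)j}$; this yields the desired bound on $K^\mathsf{A}_j$ and hence \eqref{modlocalizationestimate}.

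The first genuine step is to verify the difference bounds for $a_\ell = b_j(\ell)\sqrt{A_\ell}$. By the Leibniz rule for finite differences, $\Delta^i(b_j\sqrt{A})_\ell = \sum_{p=0}^i\binom{i}{p}(\Delta^p b_j)_{\ell+i-p}(\Delta^{i-p}\sqrt{A})_\ell$. For the first factor I would use $\kappa\in C^r$: writing $b_j(\ell)=\kappa(2^{-j}(2\ell+1))$ and applying the mean-value form of finite differences gives $\abs{\Delta^p b_j(\ell)}\lesssim 2^{-pj}$ for $p\leq r$. For the second factor I would first promote the hypothesis \eqref{diffcondthm} on $\Delta^r_\ell$ to lower orders: using $\Delta^0_\ell=\sqrt{A_\ell}\to 0$ and telescoping $\Delta^i_\ell=-\sum_{m\geq\ell}\Delta^{i+1}_m$ by downward induction, one obtains $\abs{\Delta^i_\ell}\lesssim(1+\ell)^{-(1+\beta+i)}$ for all $0\leq i\leq r$. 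Combining the two factors at $\ell\sim 2^j$ gives $\abs{\Delta^i a_\ell}\lesssim\sum_{p}2^{-pj}\,2^{-j(1+\beta+i-p)} = 2^{-j(1+\beta+i)}$, i.e.\ the required bound with $M\sim 2^{-(1+\beta)j}$.

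The heart of the argument is the abstract localisation estimate. I would prove it by iterated summation by parts against the three-term recurrence $(2\ell+1)x P_\ell = (\ell+1)P_{\ell+1}+\ell P_{\ell-1}$, which shows that multiplying the kernel by $1-\cos\theta$ produces a kernel of the same form whose multiplier is, up to lower-order terms carrying decaying weights of size $O(\ell^{-1})$, the second difference $\Delta^2 a_\ell$. Iterating $n$ times (taking $r=2n$) expresses $(1-\cos\theta)^n K$ as $\sum_\ell \tilde a_\ell\frac{2\ell+1}{4\pi}P_\ell(\cos\theta)$ with $\tilde a_\ell$ a finite combination of terms $\ell^{-b}\Delta^{2n-b}a_\ell$, $0\leq b\leq n$. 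Using $\abs{P_\ell}\leq 1$ together with the difference bounds and $\#\{\ell\sim 2^j\}\sim 2^j$, every such term contributes $\lesssim 2^j\cdot 2^j\cdot 2^{-jb}\cdot M\,2^{-(2n-b)j} = M\,2^{(2-2n)j}$, so $(1-\cos\theta)^n\abs{K}\lesssim M\,2^{(2-2n)j}$. Since $1-\cos\theta=2\sin^2(\theta/2)\geq\frac{2}{\pi^2}\theta^2$ on $[0,\pi]$, this gives $\abs{K}\lesssim M\,2^{2j}(2^j\theta)^{-2n}$, and combined with the trivial bound $\abs{K}\lesssim M\,2^{2j}$ (the case $n=0$) yields the claimed $\abs{K}\lesssim M\,2^{2j}\bigl(1+(2^j\theta)^r\bigr)^{-1}$.

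The main obstacle is precisely this abstract step: organising the iterated summation by parts so that the $\ell$-dependent recurrence weights and the proliferating lower-order difference terms are controlled uniformly, and in particular handling odd $r$, since $1-\cos\theta$ generates only even-order differences. One way around the parity issue is to prove the estimate for the least even integer $r'\geq r$ and note that for $2^j\theta\geq 1$ the bound with $r'$ implies the one with $r$, while $2^j\theta<1$ is covered by the trivial bound; this, however, needs difference bounds one order higher, so one must either refine the bookkeeping or insert a first-order summation-by-parts device. Alternatively, and more economically, one can invoke the needlet localisation machinery of \cite{NPW} as a black box: once the coefficient difference bounds of the second paragraph are in hand, they are exactly the hypotheses under which that machinery produces spatial decay of order $r$, and only the prefactor $2^{-\beta j}$ needs to be tracked. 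The two supporting ingredients — the Leibniz computation and the telescoping promotion of \eqref{diffcondthm} to lower orders — are then routine.
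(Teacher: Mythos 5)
Your reduction to the kernel $K^{\mathsf{A}}_j$ via \eqref{lambdabound}, your promotion of \eqref{diffcondthm} to the lower-order bounds $\abs{\Delta^i_\ell}\lesssim(1+\ell)^{-(1+\beta+i)}$ by telescoping, and your discrete Leibniz computation for $a_\ell=b_j(\ell)\sqrt{A_\ell}$ are all sound, and the target size $M\sim 2^{-(1+\beta)j}$ is the correct one. But the step you yourself flag as the heart of the argument --- the ``abstract localisation estimate'' via iterated summation by parts against the Legendre recurrence --- is not actually proved, and the gap is not merely bookkeeping. Each application of $(1-\cos\theta)$ buys two powers of $(2^j\theta)^{-1}$, so your scheme can only reach even decay orders; for odd $r$ (which Assumption \ref{asskappa} permits, requiring only $r>2$) you would need difference bounds of order $r+1$ that the hypothesis \eqref{diffcondthm} does not supply. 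This parity obstruction is intrinsic: since $P_\ell(\cos\theta)$ oscillates with amplitude of order $(\ell\theta)^{-1/2}$, odd polynomial decay rates in $\theta$ are not naturally produced by multiplying the kernel by powers of $1-\cos\theta$, which is precisely why \cite{NPW} (and the paper) route the argument through the Mehler--Dirichlet integral representation \eqref{Ajdirichletmehler} and Poisson summation rather than through discrete summation by parts. Your proposed fallback --- invoking the localisation machinery of \cite{NPW} as a black box on the strength of the coefficient difference bounds --- does not close the gap either: that machinery takes as input a \emph{smooth} multiplier function on $\R$ with controlled derivatives, not a discrete sequence with controlled finite differences, so the claim that your difference bounds ``are exactly the hypotheses'' of that machinery is not accurate as stated.

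The missing bridge is exactly what the paper supplies. It constructs, via midpoint cardinal spline interpolation \cite{S}, an even function $\alpha\in C^r(\R)$ with $\alpha(\ell+\frac12)=\sqrt{A_\ell}$ and $\abs{\alpha^{(i)}(t)}\lesssim(1+\abs{t})^{-(1+\beta+i)}$ for $i=0,\ldots,r$ (Lemma \ref{lmm:alpha}; this is the continuous counterpart of your telescoping step, and the equivalence of the discrete and continuous decay conditions is itself a nontrivial part of the proof, using Peano's representation of finite differences). It then feeds $\alpha$ into a Poisson-summation estimate for the Mehler--Dirichlet density (Lemma \ref{lmm:Ctilde}), which yields $\abs{C^{\mathsf{A}}_{\varepsilon}(\varphi)}\lesssim \varepsilon^{\beta-1}(1+(\varphi/\varepsilon)^r)^{-1}$ for every integer $r$, even or odd, and finally integrates against $(\cos\theta-\cos\varphi)^{-1/2}$ as in \cite{NPW} to obtain \eqref{modlocalizationestimate}. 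To salvage your route you would need either to carry out the summation-by-parts bookkeeping for even $r$ and then supply a separate half-order device for odd $r$, or to insert an interpolation step of the above type --- at which point you have essentially rederived the paper's proof.
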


\begin{remark}\label{rem:decayregularity}
As a consequence of Lemma \ref{lmm:alpha} below, if $\mathsf{A}$ satisfies \eqref{Asummability} and \eqref{diffcondthm}, then in particular there exists $C>0$ such that
\[
  A_\ell  \leq C ( 1 + \ell)^{- 2(1 + \beta)}, \quad \ell \geq 0.
\]
The summability condition in Remark \ref{rem:regularity} then applies, and thus the random field has a modification in $C^{\beta'}(\bbS^2)$ for any $\beta' < \beta$.
Conversely, \eqref{diffcondthm} is satisfied with any $r$ for $\mathsf{A}$ of power law type, for instance 
\begin{equation}\label{materndecay}
   A_\ell = ( 1 + \ell)^{- 2(1 + \beta)}, \quad \ell \geq 0. 
\end{equation}
The condition \eqref{diffcondthm} permits more general sequences $\mathsf{A}$, but restricts the asymptotics of their oscillations relative to such algebraic decay.
In the particular case of \eqref{materndecay}, one has \eqref{modlocalizationestimate} for any $r$, corresponding to decay faster than any polynomial in $\theta$.
\end{remark}

The proof of Theorem \ref{modifieddecay} requires some preparations.
Let  
\[
 \Z^* = \{ k + \textstyle\frac12 \displaystyle \colon k \in \Z \},
 \]
 and for given $\mathsf{A}$, define the sequence $( a_k )_{k \in \Z^*}$ by
\begin{equation}\label{adef}
  a_{\ell + \frac12} = \begin{cases} \sqrt{A_\ell}, \;& \ell \in \N \cup\{0\},\\ \sqrt{A_{-\ell-1}} , & -\ell \in \N.
 \end{cases}
\end{equation}
Note that $a_{\pm \frac12} = \sqrt{A_0}$, $a_{\pm \frac32} = \sqrt{A_1}$,  and so forth.

Since $\psi^\mathsf{A}_{jk}(s) = \sqrt{\lambda_{jk}} K_j^\mathsf{A}(\cos \theta)$ according to \eqref{modifiedneedlets}, we now derive an estimate for $K_j^\mathsf{A}(\cos \theta)$ with $\theta \in [0,\pi]$, for which we adapt the basic strategy from \cite{NPW}. For the Legendre polynomials, we have the Mehler-Dirichlet representation formula 
\[
P_\ell(\cos \theta) = \frac{\sqrt{2}}{\pi} \int_\theta^\pi \frac{\sin\left((\ell \plushalf) \varphi \right)}{\sqrt{\cos \theta - \cos \varphi}} d\varphi, 
\]
see, e.g., \cite[p.~85]{Sz}.
Hence with $\varepsilon_j = 2^{-j + 1}$, inserting $b_j(t) = \kappa(\varepsilon_j(t\plushalf))$, we have the integral representation
\begin{equation}\label{Ajdirichletmehler}
K^\mathsf{A}_j(\cos \theta) = \frac{\sqrt{2}}{8 \pi^2} \int_\theta^\pi \frac{ C^\mathsf{A}_{\varepsilon_j}(\varphi)}{\sqrt{\cos \theta - \cos \varphi}} d \varphi
\end{equation}
with
\[
  C^\mathsf{A}_\varepsilon(\varphi)
  = \sum_{\ell = 0}^\infty 
    \sqrt{A_\ell} \,\kappa\Bigl(\varepsilon \left(\ell \plushalf\right)\Bigr) \left(\ell \plushalf \right) \sin \Bigl( \left( \ell \plushalf\right)\varphi\Bigr).
\]

\begin{lemma}\label{lmm:Ctilde}
Let $\alpha \in C^r(\R)$, $r \geq 2$, be an even function satisfying $\alpha(k) = a_k$ for all $k \in \Z^*$. Then there exists $C>0$ depending only on $r$ and $\kappa$ such that for all $\varepsilon \in (0, \pi]$,
\begin{equation}\label{Ctildebound}
  \abs{ C^\mathsf{A}_\varepsilon(\varphi)} \leq C \biggl(   1 +  \biggl(\frac{\varphi}{\varepsilon} \biggr)^r  \biggr)^{-1} \max_{i=0,\ldots,r} \varepsilon^{-(i+1)}\norm{\alpha^{(i)}}_{L_1(\varepsilon^{-1} \supp \kappa)} , \quad \varphi \in [0,\pi] .
\end{equation}
\end{lemma}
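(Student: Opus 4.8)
The plan is to estimate the trigonometric sum
\[
  C^\mathsf{A}_\varepsilon(\varphi) = \sum_{\ell=0}^\infty \sqrt{A_\ell}\, \kappa\bigl(\varepsilon(\ell\plushalf)\bigr)\,(\ell\plushalf)\sin\bigl((\ell\plushalf)\varphi\bigr)
\]
by recognizing it as a Fourier-type series in the half-integer frequencies $k = \ell+\tfrac12 \in \Z^*$, and then transferring the analysis to the smooth interpolant $\alpha$. First I would extend the summation to all of $\Z^*$ using the evenness structure: since $\kappa$ is supported on $[\tfrac12,2]$, each term involves $k>0$, so define $g_\varepsilon(k) := \kappa(\varepsilon k)\,k$. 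The sequence $\ell\mapsto\sqrt{A_\ell}$ extended via $a_k$ is even in $k$, while $g_\varepsilon$ is odd (because of the factor $k$) when extended suitably, so that pairing $k$ with $-k$ lets me rewrite the sine series as
\[
  C^\mathsf{A}_\varepsilon(\varphi) = \frac{1}{2\ii}\sum_{k\in\Z^*} a_k\,\kappa(\varepsilon\abs{k})\,k\,e^{\ii k\varphi},
\]
converting the problem into bounding a full bilateral exponential sum whose coefficients are $F_\varepsilon(k) := \alpha(k)\,\kappa(\varepsilon\abs{k})\,k$ sampled at $k\in\Z^*$.

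The heart of the argument is then summation by parts applied $r$ times, which is the standard device for extracting decay of order $(\varphi/\varepsilon)^{-r}$. Concretely, I would apply the discrete analogue of integration by parts $r$ times against the oscillating factor $e^{\ii k\varphi}$: each step produces a factor proportional to $(e^{\ii\varphi}-1)^{-1}$, whose modulus is comparable to $\abs{\varphi}^{-1}$ on $[0,\pi]$ up to constants, at the cost of replacing $F_\varepsilon$ by its $r$-th forward difference. Since $\alpha\in C^r(\R)$ interpolates the $a_k$, the discrete $r$-th difference of $F_\varepsilon(k)$ can be written, via the integral form of finite differences (or by the mean value theorem iterated $r$ times together with the Leibniz rule for differences), in terms of integrals of $\alpha^{(i)}$ and the derivatives of $t\mapsto\kappa(\varepsilon t)\,t$ for $i=0,\ldots,r$. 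The derivatives of $\kappa(\varepsilon t)$ each contribute a factor $\varepsilon$, so expanding by Leibniz yields the weighted sum of the $\norm{\alpha^{(i)}}_{L_1}$ terms with the powers $\varepsilon^{-(i+1)}$ appearing in the claimed bound, once one accounts for the overall normalization and the support restriction $\varepsilon^{-1}\supp\kappa$.

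To combine the two regimes I would use the trivial bound (no summation by parts) to handle the range where $\varphi\lesssim\varepsilon$ and the $r$-fold summation-by-parts bound for $\varphi\gtrsim\varepsilon$; taking the minimum of the two gives the factor $(1+(\varphi/\varepsilon)^r)^{-1}$. The trivial bound simply estimates the sum by the $\ell^1$-norm of $F_\varepsilon$, which is controlled by $\varepsilon^{-2}\norm{\alpha^{(0)}}_{L_1(\varepsilon^{-1}\supp\kappa)}$ after comparing the finite sum over $k\in\supp\kappa(\varepsilon\,\cdot)$ with an integral (the factor $k$ is of size $\varepsilon^{-1}$ on the support, and the number of lattice points is of size $\varepsilon^{-1}$), which is dominated by the $i=0$ term in the maximum.

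The main obstacle I anticipate is the bookkeeping in the $r$-fold summation by parts: keeping track of the precise constants and verifying that the discrete $r$-th differences of the product $\alpha(k)\,\kappa(\varepsilon\abs{k})\,k$ genuinely reduce to continuous integrals of $\alpha^{(i)}$ with the stated $\varepsilon$-powers, while correctly handling the boundary terms (which vanish thanks to $\supp\kappa=[\tfrac12,2]$ being compact and away from the origin, so all shifted sequences are eventually zero). The interpolation hypothesis $\alpha\in C^r$ with $\alpha(k)=a_k$ is exactly what licenses rewriting a discrete $r$-th difference as $\int_0^1\cdots\int_0^1 \alpha^{(r)}(k+t_1+\cdots+t_r)\,dt_1\cdots dt_r$, and threading this through the Leibniz expansion of the difference of a product is where the care is needed, but no genuinely new idea is required beyond the classical smoothness-implies-decay principle.
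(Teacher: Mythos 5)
Your strategy is sound and it reaches the stated bound, but it is a genuinely different route from the paper's. The paper keeps the oscillation $\sin(t\varphi)$ \emph{inside} a smooth function $g(t)=\kappa(\varepsilon t)\,\alpha(t)\,t\,\sin(t\varphi)$, applies the Poisson summation formula to convert $\sum_{k\in\Z^*}$ into $\sum_{\nu\in\Z}\hat g(2\pi\nu)$, and then obtains the decay $(1+|(\varphi\pm\omega)/\varepsilon|^{r})^{-1}$ of $\hat g$ from $r$-fold differentiation under the Fourier transform; this requires an extra step bounding the tail $\sum_{\nu\neq 0}$ (the computation adapted from \cite[Prop.~3.4]{NPW}). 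You instead factor the oscillation out as $e^{\ii k\varphi}$ and perform $r$-fold Abel summation by parts, trading each factor $|e^{\ii\varphi}-1|^{-1}\lesssim\varphi^{-1}$ for one forward difference of the coefficient sequence, and then control $\sum_k|\Delta^r F_\varepsilon(k)|$ by $\|F_\varepsilon^{(r)}\|_{L_1}$ via the integral representation of finite differences. The Leibniz bookkeeping for $\frac{d^r}{dt^r}\bigl(t\,\kappa(\varepsilon t)\,\alpha(t)\bigr)$ is then identical in both arguments and produces the same weights $\varepsilon^{-(i+1)}\|\alpha^{(i)}\|_{L_1(\varepsilon^{-1}\supp\kappa)}$ after writing $\varphi^{-r}=(\varphi/\varepsilon)^{-r}\varepsilon^{-r}$. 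Your version is more elementary (no Poisson summation, no $\nu$-tail), at the cost of the discrete boundary-term bookkeeping, which is harmless here because $F_\varepsilon$ has compact support bounded away from the origin; the paper's version generalises more readily to settings where the Fourier-analytic framework is already in place.

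One step as written is incorrect, though easily repaired. In the regime $\varphi\lesssim\varepsilon$ you claim the trivial bound $\sum_k|F_\varepsilon(k)|\lesssim\varepsilon^{-2}\|\alpha^{(0)}\|_{L_1(\varepsilon^{-1}\supp\kappa)}$ and assert this is dominated by the $i=0$ term of the maximum; but that term is $\varepsilon^{-1}\|\alpha^{(0)}\|_{L_1}$, so the asserted domination fails, and moreover a sum of point values $\sum_k|\alpha(k)|$ cannot be bounded by $\|\alpha\|_{L_1}$ without invoking a derivative. The correct elementary estimate is $|F_\varepsilon(k)|\leq\int_k^\infty|F_\varepsilon'(t)|\,dt\leq\|F_\varepsilon'\|_{L_1}\lesssim\|\alpha\|_{L_1(\varepsilon^{-1}\supp\kappa)}+\varepsilon^{-1}\|\alpha'\|_{L_1(\varepsilon^{-1}\supp\kappa)}$, and since the number of relevant $k$ is $O(\varepsilon^{-1})$ this gives $\sum_k|F_\varepsilon(k)|\lesssim\varepsilon^{-1}\|\alpha^{(0)}\|_{L_1}+\varepsilon^{-2}\|\alpha^{(1)}\|_{L_1}$, which is covered by the maximum over $i=0,1$ (available since $r\geq 2$). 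With that correction the combination of the two regimes via $\min$ yields $(1+(\varphi/\varepsilon)^r)^{-1}$ as you describe.
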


\begin{proof}
Note first that for any fixed $\varphi > 0$, we can rewrite $C^\mathsf{A}_\varepsilon$ as
\begin{equation}\label{Ctildeexpansion}
   C^\mathsf{A}_\varepsilon(\varphi) = \frac12 \sum_{\mu\in \Z} g\!\left(\mu\plushalf\right), \quad g(t):= \kappa(\varepsilon t) \,\alpha(t) \, t\, \sin (t\varphi),
\end{equation}
and by the Poisson summation formula,
\begin{equation}\label{PSF}
  \sum_{\mu\in\Z} g\!\left( \mu \plushalf \right) = \sum_{\nu\in\Z} e^{\ii \pi \nu} \hat g(2\pi\nu).
\end{equation}
Using that $g$ is even and that $\kappa(\varepsilon t)\, \alpha(t)\, t$ is odd, we obtain
\begin{align*}
\hat{g}(\omega) &=  \int_{\mathbb{R}} \kappa(\varepsilon t)\, \alpha(t)\, t\, \sin (t \varphi)\, \cos (\omega t)\, dt \\
&= \frac{1}{2} \int_{\mathbb{R}} \kappa(\varepsilon t)\, \alpha(t)\, t\, \Bigl(\sin \bigl((\varphi + \omega)t \bigr) + \sin \bigl((\varphi - \omega)t\bigr) \Bigr)\, dt \\
&=  \frac{\ii}{2}  \int_{\mathbb{R}} \kappa(\varepsilon t)\, \alpha(t)\, t\, \bigl( e^{-\ii (\varphi + \omega)t } + e^{-\ii (\varphi - \omega)t} \bigr)\, dt \\
&= - \frac{1}{2 \varepsilon}   \int_{\mathbb{R}} \kappa(t) \,\alpha(t/\varepsilon ) (-\ii t/ \varepsilon) \bigl(e^{-\ii (\varphi + \omega)t/\varepsilon } + e^{-\ii (\varphi - \omega)t/\varepsilon} \bigr)\,dt \\
& = - \frac{1}{2 \varepsilon}   \frac{d}{d \varphi} \int_{\mathbb{R}} \kappa(t) \,\alpha(t/\varepsilon ) \bigl(e^{-\ii (\varphi + \omega)t/\varepsilon } + e^{-\ii (\varphi - \omega)t/\varepsilon}\bigr)\,dt .
\end{align*}
Thus we have
\begin{equation}\label{ghatdef}
  \hat g(\omega)  = - \frac{1}{2 \varepsilon} \frac{d}{d\varphi}  \bigl(   \rho(\varphi + \omega)  + \rho(\varphi - \omega) \bigr),  
\end{equation}
where
\[
  \rho(\varphi\pm\omega) := \mathcal{F}[\kappa \, \alpha(\cdot / \varepsilon)] \left( \frac{\varphi \pm \omega}{\varepsilon} \right), \quad \omega \in  \mathbb{R},\;\; r,j \geq 0.
\]

By elementary properties of the Fourier transform,
\[
  ( \ii \xi)^r \left( \ii \frac{d}{d\xi} \right)^j \mathcal{F}[f](\xi) = \mathcal{F} \left[t\mapsto \frac{d^r}{dt^r} \bigl(t^j f(t)\bigr) \right] (\xi) ,\quad \xi \in \R,
\]
for any $f$ having sufficient regularity and decay.
Thus for 
\[
  \zeta_\varepsilon(t) := \frac{d^r}{dt^r} \left( t \,\kappa(t)\, \alpha(t/\varepsilon)\right)
\]
we obtain
\[
  \left( \ii \frac{\varphi\pm \omega}{\varepsilon} \right)^r
    \left(  \ii\varepsilon \frac{d}{d\varphi}\right) 
      \rho(\varphi  \pm \omega) 
       = \widehat{ \zeta_\varepsilon } \left(\frac{\varphi\pm\omega}{\varepsilon}\right).
\]
As a consequence,
\begin{equation}\label{rhobound}
  \biggabs{ \left( \frac{\varphi\pm\omega}{\varepsilon} \right)^r \frac{d}{d\varphi} \rho(\varphi \pm \omega) } 
   \leq \frac{1}{\varepsilon} \norm{\zeta_\varepsilon}_{L_1}.
\end{equation}
Note that with a $C>0$ depending on $r$ and $\kappa$, but independent of $\varepsilon$,
\begin{equation}\label{zetabound}
\begin{aligned}
    \norm{\zeta_\varepsilon}_{L_1}
     &\leq \int_{\R} \Bigabs{\frac{d^r}{dt^r} \left( t \,\kappa(t)\, \alpha(t/\varepsilon)\right) } \,dt 
     \leq \int_{\supp \kappa} \sum_{i = 0}^r {r \choose i} \bigabs{ (t \kappa(t))^{(r-i)} (\alpha(t/\varepsilon))^{(i)} } \,dt \\
     &\leq C \max_{i = 0,\ldots,r} \norm{ \bigl(\alpha(\varepsilon^{-1}\cdot)\bigr)^{(i)}}_{L_1(\supp\kappa)} 
     = C  \max_{i = 0,\ldots,r} 
     	\varepsilon^{- i + 1}
       \norm{ \alpha^{(i)}}_{L_1(\varepsilon^{-1}\supp \kappa)}. 
 \end{aligned}
\end{equation}
Similarly, we find
\begin{equation}\label{rhozero}
   \biggabs{  \frac{d}{d\varphi} \rho(\varphi \pm \omega) } 
     \leq {C} \norm{ \alpha}_{L_1(\varepsilon^{-1}\supp \kappa)} \,.
\end{equation}

We now combine \eqref{rhobound} and \eqref{zetabound} and add \eqref{rhozero}, which by \eqref{ghatdef} yields 
\[
   \abs{\hat g(\omega)} \leq \frac{C}{2\varepsilon} 
   \left[
   \biggl(   1 +  \biggabs{\frac{\varphi + \omega}{\varepsilon} }^r  \biggr)^{-1} 
   + 
   \biggl(   1 +  \biggabs{\frac{\varphi - \omega}{\varepsilon} }^r  \biggr)^{-1} 
   \right]
   \max_{i=0,\ldots,r} \varepsilon^{-i}\norm{\alpha^{(i)}}_{L_1(\varepsilon^{-1} \supp \kappa)}  .
\]

Inserting this bound into \eqref{PSF} and using \eqref{Ctildeexpansion} gives
\[
   \abs{ C^\mathsf{A}_\varepsilon(\varphi)} \leq \frac{C}{2\varepsilon}   \sum_{\nu \in \Z}  \biggl(   1 +  \biggabs{\frac{\varphi + 2 \pi \nu}{\varepsilon} }^r  \biggr)^{-1}  \max_{i=0,\ldots,r} \varepsilon^{-i}\norm{\alpha^{(i)}}_{L_1(\varepsilon^{-1} \supp \kappa)}.
\] 
Proceeding similarly  as in \cite[Prop.~3.4]{NPW}, we can obtain a bound for the term 
\begin{equation}\label{seven}
   \sum_{\nu \in \Z}  \biggl(   1 +  \biggabs{\frac{\varphi + 2 \pi \nu}{\varepsilon} }^r  \biggr)^{-1}.
\end{equation}
For the convenience of the reader, we include this argument. First note that the dominant term of \eqref{seven} is obtained for $\nu = 0 $. Furthermore, since $\varphi \in [0,\pi]$, we have
\begin{align*}
 \sum_{\substack{\nu \in \Z \\ \nu \neq 0}}  \biggl(   1 +  \biggabs{\frac{\varphi + 2 \pi \nu}{\varepsilon} }^r  \biggr)^{-1} & \leq  
2  \sum_{\substack{\nu \in \Z \\ \nu \neq 0}} \left|\frac{2 \pi \nu -\pi}{\varepsilon}\right| ^{-r} \\&\leq
2 \left( \frac{\varepsilon}{\pi} \right)^r  \left( 1 + \int_1^\infty (2t - 1)^{-r} dt \right) \\&\leq 
2  \left( \frac{\varepsilon}{\pi} \right)^r  \frac{2r - 1}{r-1}.
\end{align*}
Next we multiply and divide on the right-hand side  by  $1 + (\frac{\varphi}{\varepsilon})^r$. Utilising one more time that  $\varphi \in [0,\pi]$, $r \geq 2$ and $\left( \frac{\varepsilon}{\pi} \right)^r  \leq 1$, we obtain
\begin{align*}
 \sum_{\substack{\nu \in \Z \\ \nu \neq 0}}  \biggl(   1 +  \biggabs{\frac{\varphi + 2 \pi \nu}{\varepsilon} }^r  \biggr)^{-1} & \leq  
 2  \frac{2r - 1}{r-1} \left(  \left( \frac{\varepsilon}{\pi} \right)^r  +1\right) \left(1 + \left(\frac{\varphi}{\varepsilon}\right)^r \right)^{-1} 
 \\& \leq 12 \left(1 + \left(\frac{\varphi}{\varepsilon}\right)^r \right)^{-1},
 \end{align*}
which together with the bound for $\nu =0$ gives \eqref{Ctildebound}.
\end{proof}

To further use \eqref{Ctildebound}, for given $(a_k)_{k \in \Z^*}$ as in \eqref{adef}, we need to construct functions $\alpha$ such that we have suitable bounds on
\begin{equation}
\max_{i=0,\ldots,r} \varepsilon_j^{-(i+1)}\norm{\alpha^{(i)}}_{L_1(\varepsilon_j^{-1} \supp \kappa)} , \quad \varepsilon_j = 2^{-j+1}, \; j \geq 0,
\end{equation}
and thus on the derivatives $\alpha^{(i)}$.
To this end, we now take a closer look at the connection between the $A_\ell$ and the asymptotic behaviour of possible choices of $\alpha$. 

\begin{lemma}\label{lmm:alpha}
Let $r \in \N_0$. The following two statements are equivalent:
\begin{enumerate}[{\rm (i)}]
\item
    The sequence $(A_{\ell})_{\ell \in \N_0}$ satisfies $\lim_{\ell \to \infty} A_\ell = 0$ and for some $\beta>0$ and $c_r>0$, 
    \begin{equation}\label{diffdecay}
         \bigabs{\Delta_\ell^r } 
        \leq c_r ( 1 + \ell)^{-(1 + \beta + r) }.
    \end{equation}
\item
    There exists an even function $\alpha \in C^r(\R)$ satisfying $\alpha(k) = a_k$ for all $k \in \Z^*$ with $a_k$ as in \eqref{adef} such that 
    \begin{equation}\label{derivdecay}
       \abs{\alpha^{(i)}(t)} \leq \bar c_r ( 1 + \abs{t})^{-(1 + \beta + i)},\quad t\in \R, \quad \text{for $i = 0,\ldots,r$,}
    \end{equation}
   where $\beta>0$ and $\bar c_r > 0$.
\end{enumerate}
\end{lemma}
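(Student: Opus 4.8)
The plan is to treat the two implications separately; the implication (ii)$\Rightarrow$(i) is elementary, while the reverse implication (i)$\Rightarrow$(ii), the actual construction, carries the main work.

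For (ii)$\Rightarrow$(i) I would invoke the mean value theorem for finite differences. Since $\alpha \in C^r(\R)$ and $\sqrt{A_\ell} = a_{\ell+\frac12} = \alpha(\ell+\frac12)$ by \eqref{adef}, the $r$-th forward difference in \eqref{Aelldiff} satisfies $\Delta^r_\ell = \alpha^{(r)}(\xi_\ell)$ for some $\xi_\ell \in (\ell+\frac12,\,\ell+\frac12+r)$. The bound \eqref{derivdecay} at $i=r$ then gives $\bigabs{\Delta^r_\ell} \le \bar c_r (1+\xi_\ell)^{-(1+\beta+r)} \le \bar c_r (1+\ell)^{-(1+\beta+r)}$, which is \eqref{diffdecay}, and $A_\ell = \alpha(\ell+\frac12)^2 \le \bar c_r^2 (1+\ell)^{-2(1+\beta)} \to 0$.

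For (i)$\Rightarrow$(ii) I would proceed in two steps. \emph{First}, I would upgrade \eqref{diffdecay} to the intermediate bounds $\bigabs{\Delta^i_\ell} \le C_i (1+\ell)^{-(1+\beta+i)}$ for $i=0,\ldots,r$ (as announced in Remark \ref{rem:decayregularity}). Since the right-hand side of \eqref{diffdecay} is summable, each sequence $(\Delta^i_\ell)_\ell$ converges as $\ell\to\infty$; I would argue by downward induction on $i$ that every such limit must vanish, since otherwise telescoping the relation $\Delta^i_\ell = \Delta^{i-1}_{\ell+1}-\Delta^{i-1}_\ell$ would force $\Delta^0_\ell=\sqrt{A_\ell}$ to grow polynomially in $\ell$, contradicting $A_\ell\to 0$. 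With the limits identified as zero one has $\Delta^i_\ell = -\sum_{m\ge\ell}\Delta^{i+1}_m$, and summing the tail of the order-$(i+1)$ bound yields the order-$i$ bound, completing the induction from $i=r$ down to $i=0$. \emph{Second}, I would construct $\alpha$ by smooth cardinal interpolation of the even sequence $(a_k)_{k\in\Z^*}$: fix an even kernel $\phi$ with $\phi(k)=\delta_{k,0}$ for $k\in\Z$, obtained by taking $\hat\phi$ to be a smooth even bump supported in $[-\tfrac{3\pi}{2},\tfrac{3\pi}{2}]$ whose $2\pi$-periodisation is identically one, so that $\phi$ is Schwartz, and set $\alpha(t)=\sum_{n\in\Z} a_{n+\frac12}\,\phi(t-n-\tfrac12)$. (Equivalently one may use a B-spline scheme of order at least $r+2$, for which the exact derivative identity $\frac{d^i}{dt^i}\sum_k c_k B_m(t-k)=\sum_k(\Delta^i c)_k\,B_{m-i}(t-k)$ makes the mechanism transparent.) This $\alpha$ is even, $C^r$, and interpolates the data. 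To obtain \eqref{derivdecay} I would transfer differences onto the kernel by $i$-fold summation by parts, writing $\alpha^{(i)}(t)$ as a sum over $n$ of $(\Delta^i a)_{n+\frac12}$ against an iterated anti-differenced kernel; the zero-sum identities $\sum_m \phi^{(i)}(\,\cdot\,-m-\tfrac12)=0$ for $i\ge1$ (immediate from Poisson summation and the support of $\hat\phi$) kill the boundary terms and keep the anti-differenced kernel rapidly decaying. Estimating the series using the step-one bound $\bigabs{\Delta^i_\ell}\le C_i(1+\ell)^{-(1+\beta+i)}$ together with the localisation of the kernel then gives \eqref{derivdecay}; by even symmetry the same difference bounds hold as $n\to-\infty$, and the finitely many indices near the symmetry centre $n=0$ contribute only a bounded, rapidly localised remainder.

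I expect the main obstacle to be this second step, specifically making the derivative decay \emph{uniform}: the summation-by-parts bookkeeping must be organised so that the $i$-th derivative of the interpolant is governed precisely by the $i$-th differences of the data near $t$, and one must check that passing from the samples $a_k$ to a genuine interpolant (the deconvolution implicit in the cardinal scheme, or the Euler--Frobenius inverse filter in the B-spline scheme) preserves the difference decay from step one. The first step, though elementary, also needs the care indicated above in ruling out nonzero limits of the intermediate differences.
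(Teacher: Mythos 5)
Your argument is correct, but for the main implication (i)$\Rightarrow$(ii) it takes a genuinely different route from the paper. The paper works entirely within Schoenberg's cardinal spline theory: $\alpha$ is taken to be the midpoint cardinal spline interpolant in $\mathcal{S}^*_{2r-1}$, Peano's kernel theorem is used to identify the B-spline coefficients of $\alpha^{(r)}$ as a convolution of the $r$-th differences $\Delta^r a_j$ with an exponentially decaying (Euler--Frobenius) filter $w$, which gives the decay of $\alpha^{(r)}$ directly; the bounds for $i<r$ are then obtained by integrating downward from $i=r$ using $\alpha^{(i)}(t)\to 0$, so the intermediate difference bounds $\bigabs{\Delta^i_\ell}\leq C_i(1+\ell)^{-(1+\beta+i)}$ are never needed at the sequence level (they appear only a posteriori, in Remark \ref{rem:decayregularity}). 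You instead establish those intermediate bounds first by a downward induction on $i$ (ruling out nonzero limits of $\Delta^i_\ell$ via the hypothesis $A_\ell\to 0$ and then summing tails), and then distribute them among the derivatives of a band-limited Meyer-type interpolation kernel via $i$-fold Abel summation; the Poisson-summation identities $\sum_m\phi^{(i)}(\cdot-m)=0$ for $i\geq 1$ are exactly what makes the iterated tail-summed kernels rapidly decaying, and since $\widehat{\phi^{(i)}}$ vanishes to order $i$ at the origin the iteration can be carried out precisely $i$ times, which is what is needed. Your treatment of the finitely many differences straddling the symmetry centre is also adequate, since the tail-summed kernels decay faster than any polynomial. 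What each approach buys: the paper's spline construction yields the clean identity ``B-spline coefficients of $\alpha^{(r)}$ $=$ filtered $r$-th differences'' and only ever has to control the top derivative, at the price of importing Schoenberg's machinery; your construction avoids splines entirely and produces the intermediate difference estimates as an explicit by-product, at the price of the summation-by-parts bookkeeping. For (ii)$\Rightarrow$(i) the two arguments are essentially identical: your mean value theorem for forward differences and the paper's Peano kernel bound both reduce to $\bigabs{\Delta^r\alpha(\ell)}\leq\max_{[\ell,\ell+r]}\abs{\alpha^{(r)}}$.
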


\begin{proof}
Since the statement  for $r=0$ follows directly by piecewise linear, continuous spline interpolation, we can assume $r$ to be positive. 

We first show that (ii) follows from (i).
  Note that it suffices to show that for some $c>0$,
  \begin{equation}\label{alphacondition}
  \begin{aligned}
  & \abs{\alpha^{(r)}(t)} \leq  c ( 1 + \abs{t})^{-(1 + \beta + r)}  & & \text{for all $t \in \R$, and} \\
 &  \lim_{\abs{t}\to\infty} \alpha^{(i)}(t) = 0 ,   & & \text{for $i = 0, \ldots, r-1,$} 
  \end{aligned}
  \end{equation}
  since then for $t>0$,
  \[
  \begin{aligned}
   \abs{ \alpha^{(r-1)}(t) }  & \leq \int_t^\infty \abs{\alpha^{(r)}(\tau)} \,d\tau  \\
     &  \leq  c \int_t^\infty  ( 1 + \tau)^{-(1 + \beta + r)} \,d\tau =  \frac{c}{ \beta + r } ( 1 + t)^{-( \beta + r )} .
     \end{aligned}
  \]
  The same argument applies to $t<0$, 
  and by induction, the desired estimates for $i =  r-2, r-3,\ldots, 0$ then follow analogously.

  We now use cardinal spline interpolation to construct $\alpha$.
  Let $\mathcal{S}^*_n$ be the space of midpoint cardinal splines \cite[\S1]{S}, that is, any $S \in \mathcal{S}^*_n$ is $n$ times weakly differentiable and $S|_{(k,k+1)}$ is a polynomial of degree $n$ for each $k \in \Z^*$. As a consequence of \eqref{diffdecay}, by \cite[\S6, Thm.~1, Rem.~1]{S}, we can choose $\alpha$ as the unique $r$-times weakly differentiable midpoint cardinal spline in $\mathcal{S}^*_{2r-1}$   with $\alpha^{(r)} \in L_p(\R)$ for $1 \leq p < \infty$ such that $\alpha(k) = a_k$ for $k \in \Z^*$.

  In order to verify that this $\alpha$ satisfies \eqref{alphacondition}, for $k \in \Z^*$, let $M$ be the cardinal B-Spline with $2r+1$ knots $\{ -r, -r +1, \ldots, 0,\ldots, r-1, r \}$. Then as shown in \cite[\S4 Thm.~3 and \S4.5]{S}, $\alpha$ can be written as
  \[
     \alpha(t) = \sum_{k \in \Z^*} a_k \sum_{j\in \Z} w_j \,M(t - k - j),
  \]
  with a real-valued sequence $(w_j)_{j\in \Z}$ such that $w_j = w_{-j}$ for all $j$ and $\abs{w_j} \leq C \rho^j$ for some $C>0$ and $\rho \in (0,1)$. This shows in particular that $\alpha$ is even, that is, $\alpha(t) = \alpha(-t)$, $t \in \R$. Moreover, since $A_\ell \to 0$  as $\ell \to \infty$ and thus $a_k \to 0$ as $\abs{k}\to \infty$, we have $\alpha^{(i)}(t) \to 0$ as $\abs{t}\to \infty$ for $i = 0, \ldots, r$.

  In order to establish the first condition in \eqref{alphacondition}, we now proceed similarly to \cite[\S6.1]{S}.
  For all $k \in \Z^*$, we define the forward differences
  \[
  \begin{aligned}
   \Delta^i a_k &= \Delta^{i-1} a_{k+1} - \Delta^{i-1} a_k & &\text{with $\Delta^0 a_k = a_k$,}
   \\
   \Delta^i \alpha(k) &= \Delta^{i-1} \alpha(k+1) - \Delta^{i-1} \alpha(k) & &\text{with $\Delta^0 \alpha(k) = \alpha(k)$.}
   \end{aligned}
  \]
  By construction, $\Delta^i \alpha(k) = \Delta^i a_k$ for all $i, k$. 

  Since $\alpha^{(r)} \in \mathcal{S}^*_{r-1}$, there exists a representation 
  \begin{equation}\label{alphaderiv}
     \alpha^{(r)}(t) = \sum_{k \in \Z^*} c_k\, Q(t - k),
  \end{equation}
  where $Q$ is the B-Spline with knots $\{ 0, \ldots, r\}$. As a consequence of Peano's theorem (see \cite[\S2 eq.~(1.3)]{S}),
  \begin{equation}\label{peano}
    \Delta^r \alpha(\ell) = \int_\R \alpha^{(r)}(t)\, Q(t - \ell) \dt, \quad \ell \in \Z^*.
  \end{equation}
  Combining \eqref{alphaderiv} and \eqref{peano} and using the identity 
  \[
      \int_\R Q(t - k) \,Q(t - \ell) \dt = M(k - \ell), \quad k , \ell \in \Z^*,
  \]
  see \cite[\S6 eq.~(1.19)]{S}, after multiplying \eqref{alphaderiv} by $Q(t - \ell)$ and integrating we obtain
  \[
      \sum_{ k \in \Z^* } M( k - \ell ) \, c_k = \Delta^r a_\ell, \quad \ell \in \Z^*.
  \]
  As shown in \cite[\S4.5]{S}, this implies
  \[
     c_k = \sum_{j \in \Z^*} w_{k - j}\, \Delta^r a_j, \quad k \in \Z^*.
  \]
  Substituting this back into \eqref{alphaderiv} and using that for $j \in \Z^*$,
  \begin{equation}\label{diffrelation}
     \Delta^r a_j = \Delta^r_{j- \frac12} \quad\text{if $j>0$,}
      \qquad
      \Delta^r a_j = (-1)^r\Delta^r_{-j - r - \frac12}\quad \text{if $j < -r$,}
  \end{equation}
  yields the first condition in \eqref{alphacondition}. Together with $\alpha^{(i)}(t)\to 0$ as $\abs{t}\to \infty$, this now implies the estimates for $i = 0,\ldots, r-1$. 

To show that (ii) implies (i), we observe that again by  Peano's theorem,
\[
    \bigabs{\Delta^r \alpha(\ell)} \leq \int_\R \bigabs{\alpha^{(r)}(t)}\bigabs{ Q(t - \ell) } \dt
     \leq \max_{t \in [\ell, \ell + r]} \abs{ \alpha^{(r)}(t)},
\]
 where $Q$ is the B-Spline with knots $\{ 0, \ldots, r\}$. 
By \eqref{diffrelation}, this implies \eqref{diffdecay}. Moreover, $\lim_{\ell\to\infty} A_\ell = 0$ follows directly from \eqref{derivdecay} for $i=0$ and $A_k = A_{-k}$, for $k \in \Z^*$ is satisfied since  $\alpha$ is an even function.
\end{proof}

\begin{remark}
 As can be seen from the proof, the function $\alpha$ can be chosen as a cardinal spline interpolant of order $2r-1$ with knots in $\Z^*$.
\end{remark}

\begin{proof}[Proof of Theorem \ref{modifieddecay}.]
With $\alpha$ as in Lemma \ref{lmm:alpha}(ii), from \eqref{derivdecay} we obtain
\begin{equation}\label{alphaderivbound}
  \norm{ \alpha^{(i)}}_{L_1(\varepsilon^{-1}\supp \kappa)}
   \leq C_1 \varepsilon^{\beta + i}
\end{equation}
for $i = 0, \ldots, r$, with $C_1>0$ depending additionally on $\bar c_r$, but not on $\varepsilon$. Consequently, Lemma \ref{lmm:Ctilde} gives
\begin{equation}\label{Ctildedecay}
  \abs{ C^\mathsf{A}_\varepsilon(\varphi)} \leq \varepsilon^\beta \frac{C_2 \varepsilon^{-1}}{1 + \left( \frac{\varphi}{\varepsilon} \right)^r} 
\end{equation}
with a $C_2>0$.
Inserting this into \eqref{Ajdirichletmehler} and estimating the integral over $\varphi$ exactly as in \cite[Thm.~3.5]{NPW} (see also \cite[Thm.~13.1]{MP}) yields
\begin{equation}\label{Kjest}
  \abs{K_j^\mathsf{A}(\cos \theta)} 
    \leq  2^{-\beta j} \frac{ C_3 2^{j}}{1 +  (2^j \theta)^r} .
\end{equation}
Recall that $\psi^\mathsf{A}_{jk}(s) = \sqrt{\lambda_{jk}} K_j^\mathsf{A}(s\cdot \xi_{jk}) = \sqrt{\lambda_{jk}} K_j^\mathsf{A}(\cos\theta)$, where $\sqrt{\lambda_{jk}} \leq \sqrt{c} 2^{-j}$ by \eqref{lambdabound}. Combined with \eqref{Kjest}, we arrive at \eqref{modlocalizationestimate}.
\end{proof}

\begin{remark}
The type of assumptions on the power spectrum of the random field that we use to obtain localisation of expansion functions are closely related to those used in \cite{BCM} for stationary Gaussian random fields on $\R^d$ with covariance given by a function $k$ as in \eqref{eq:stationarycov}. There the Fourier transform $\hat k$ plays a very similar role as the power spectrum $\mathsf{A}$ in the present case. For the basis functions constructed there, localisation with algebraic decay of order $r>d$ is obtained if for some $t > d/2$, there exist $c,C>0$ such that $c ( 1+ \abs{\xi}^2)^{-t} \leq \hat k(\xi) \leq C ( 1 + \abs{\xi}^2)^{-t}$ and $\abs{\partial^\alpha \hat k (\xi)} \leq C ( 1 + \abs{\xi}^2)^{-(t + \abs{\alpha}/2)}$ for all $\xi \in \R^d$ for any multi-index $\alpha \in \N_0^d$ with $\abs{\alpha} \leq r$. For Mat\'ern covariances, these conditions are satisfied for any $r>0$, and one obtains superalgebraic decay, as in the case \eqref{materndecay} in the present setting.
\end{remark}

\begin{remark}
As Lemma \ref{lmm:alpha} shows, the assumptions of Theorem \ref{modifieddecay} yield the decay conditions \eqref{derivdecay} on $\alpha^{(i)}$ for $i=0,\ldots, r$ and are thus sufficient to ensure \eqref{alphaderivbound} for $i=0,\ldots, r$, which is subsequently used in Lemma \ref{lmm:Ctilde} for the proof of Theorem \ref{modifieddecay}. For this purpose, these assumptions are in fact also close to being necessary in the following sense: It is not difficult to see that $\norm{ \alpha^{(i)}}_{L_1(\varepsilon^{-1}\supp \kappa)}
   \leq C \varepsilon^{\beta + i }$ for $i=0,\ldots,r$ with a $C>0$ conversely already implies that there exists $\tilde C>0$ such that $\abs{\alpha^{(i)}(t)} \leq  \tilde C ( 1 + \abs{t})^{-(1 + \beta + i)}$ for $i = 0,\ldots, r-1$. At the expense of a substantially more technical argument, however, one could replace $\abs{\alpha^{(r)}(t)} \leq C ( 1 + \abs{t})^{-(1 + \beta + r)}$ by the weaker requirement $\norm{ \alpha^{(r)}}_{L_1(\varepsilon^{-1}\supp \kappa)}
   \leq C \varepsilon^{\beta + r }$ for a $C>0$.
\end{remark}

\section{{Numerical Implementation}}\label{sec:num}

\subsection{{Choice of quadrature and $\kappa$}}
The numerical realisation of the needlet representation of an isotropic Gaussian random field given by its power spectrum $(A_\ell)_{\ell \in \N_0}$ depends on a choice of quadrature on $\bbS^2$ satisfying Assumption \ref{assLambda} and a function $\kappa$ satisfying Assumption \ref{asskappa}. 
One possible choice for the quadrature points $\xi_{jk}$, $j \in \N_0$, $k = 1,\ldots, n_j$, are the spherical $t$-designs constructed by Womersley \cite{W}.
In this case, for each $j$, all weights are equal: $\lambda_{jk} = 4 \pi	/ n_j$ for $k = 1,\ldots,n_j$.

A possible choice of $\kappa$, similar to the construction of Meyer wavelets given in \cite{Dau}, is the following: let
\[
 \eta(x) = \frac{\eta_0(x) }{ \eta_0(x) + \eta_0( 1- x)}
 \quad\text{where ~}
  \eta_0(x) = \begin{cases} \exp( - x^{-1}), \quad & x > 0 ,\\
     0, & \text{otherwise,}
    \end{cases}
\]
and let
\begin{equation}\label{eq:kappaexample}
  \kappa(x) = \begin{cases} \sin \left( \frac{\pi}{2} \eta( 2x -1)\right) , \quad & x \in (\frac12, 1],\\
     \cos \left( \frac{\pi}{2} \eta ( x - 1) \right) , \quad & x \in (1, 2), \\
     0 , & \text{otherwise.}
      \end{cases}
\end{equation}
Then it is easy to see that $\kappa \in C^\infty$ and that $\kappa$ satisfies Assumption \ref{asskappa}. 

\subsection{{Numerical evaluation of modified needlets}}
For any given power spectrum, the task of numerically evaluating the modified needlets, which with the above choices are given by
\[
\begin{aligned}
  \psi^\mathsf{A}_{jk}(s) &= \sqrt{\lambda_{jk} } \sum_{\ell = 0}^\infty b_j(\ell)  \sqrt{A_\ell} \frac{2\ell + 1}{4\pi} P_\ell (s\cdot \xi_{jk})   \\
  & = \sqrt{ \frac{4 \pi}{n_j} }  \sum_{\ell = 0}^\infty \kappa\bigl( 2^{-j} (2 \ell + 1) \bigr)  \sqrt{A_\ell} \frac{2\ell + 1}{4\pi} P_\ell (s\cdot \xi_{jk}) 
  \end{aligned}
\]
is almost the same as the evaluation of standard needlets, which correspond to the special case $A_\ell = 1$, $\ell \geq 0$. Note that since $\supp \kappa = [\frac12, 2]$, the summation over $\ell$ effectively ranges over {$\ell = 2^{j-2},\ldots, 2^j - 1$}.

{We thus have $\psi^\mathsf{A}_{jk}(s) = R_j(s \cdot \xi_{jk})$ with} the $k$-independent radial components
\begin{equation}\label{Rjdef}
   R_j (t) :=  \sqrt{ \frac{4 \pi}{n_j} }  {\sum_{\ell = 2^{j-2}}^{2^j-1}} \kappa\bigl( 2^{-j} (2 \ell + 1) \bigr)  \sqrt{A_\ell} \frac{2\ell + 1}{4\pi} P_\ell (t),  \quad {t \in [-1,1]}.
\end{equation}
In this manner, one {obtains an explicit representation of $R_j$ as a polynomial of degree at most $2^j-1$ that can be directly used for evaluating $\psi^\mathsf{A}_{jk}(s) = R_j(s\cdot \xi_{jk})$ for any $k$ and $s$.
It can be evaluated numerically by any scheme for the fast evaluation of Legendre expansions. In particular, it can be converted at near-linear costs of order $\mathcal{O}(j^2 2^j)$ to a Chebyshev expansion by the scheme from \cite{HT}. This can subsequently be used to evaluate $R_j \circ \cos$ on any uniform grid of $2^J$ points in $[0,\pi)$ with $J\geq j$ by the fast discrete cosine transform, using $\mathcal{O}(J 2^J)$ operations.}

Figure \ref{fig:plots} shows numerical examples of the modified needlets with $A_\ell = (1 + \ell)^{-2(1 + \beta)}$, for $\beta=\frac12$ and $\beta = 2$, compared to the standard needlets $\psi_{jk}$. In view of Remark \ref{rem:decayregularity}, the assumptions of Theorem \ref{modifieddecay} are satisfied for any $r \in \N$ in this example. The plots were generated using an implementation of the scheme described above in Julia 1.5.2, where we used the  ApproxFun package \cite{OT} for evaluating Legendre expansions by transformation to Chebyshev expansions.

\begin{figure}[p]
\begin{tabular}{ccc}
 \includegraphics[height=3.2cm]{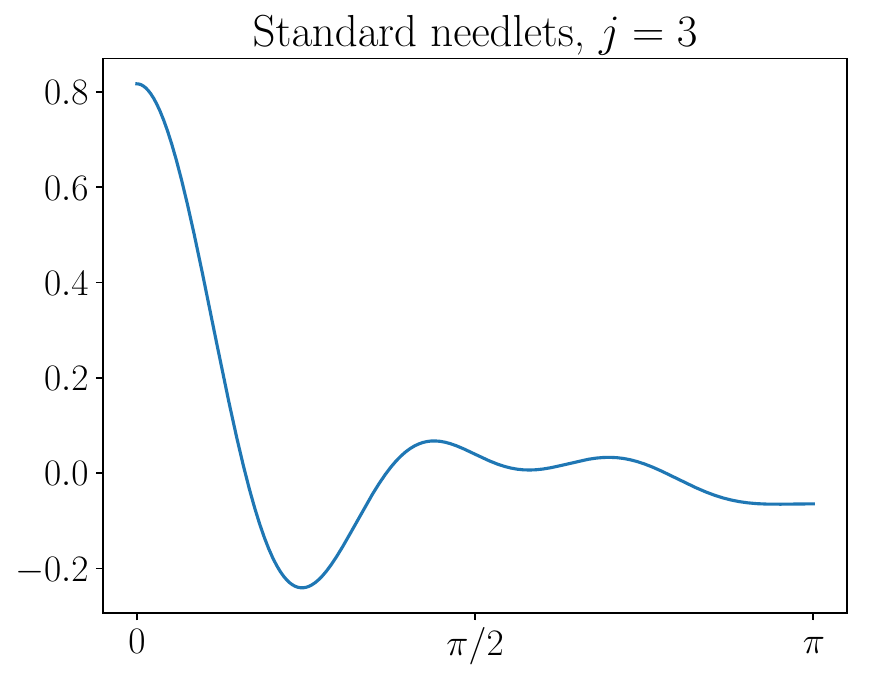}  & \includegraphics[height=3.25cm]{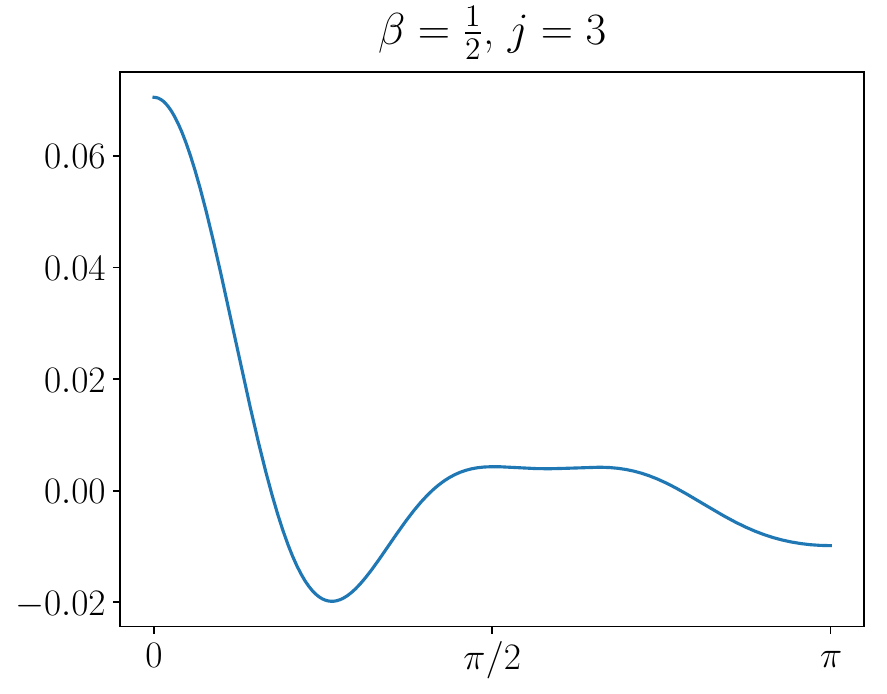} & \includegraphics[height=3.2cm]{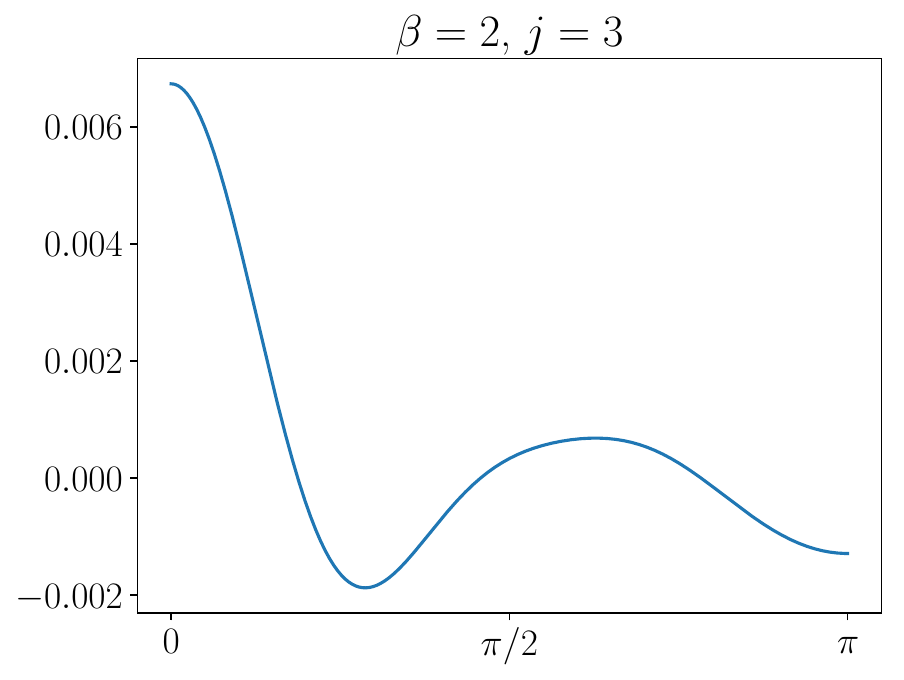} \\[-6pt]
 \includegraphics[width=4cm]{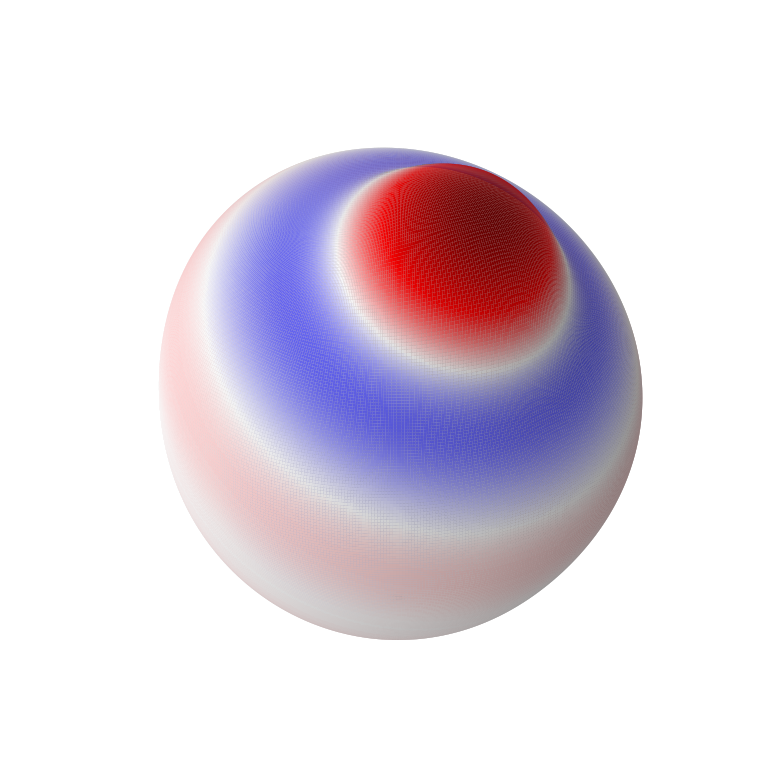} &  \includegraphics[width=4cm]{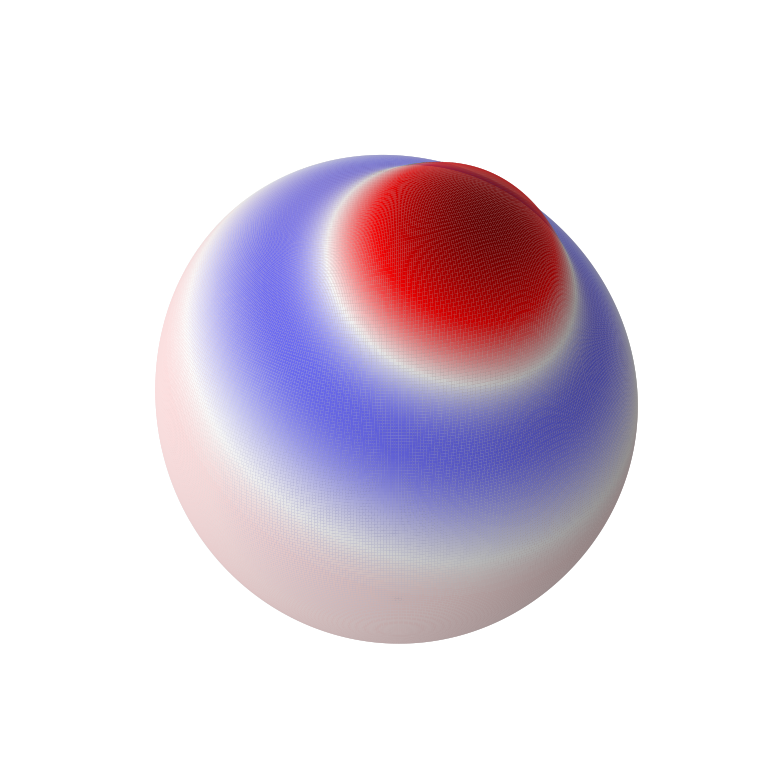} &  \includegraphics[width=4cm]{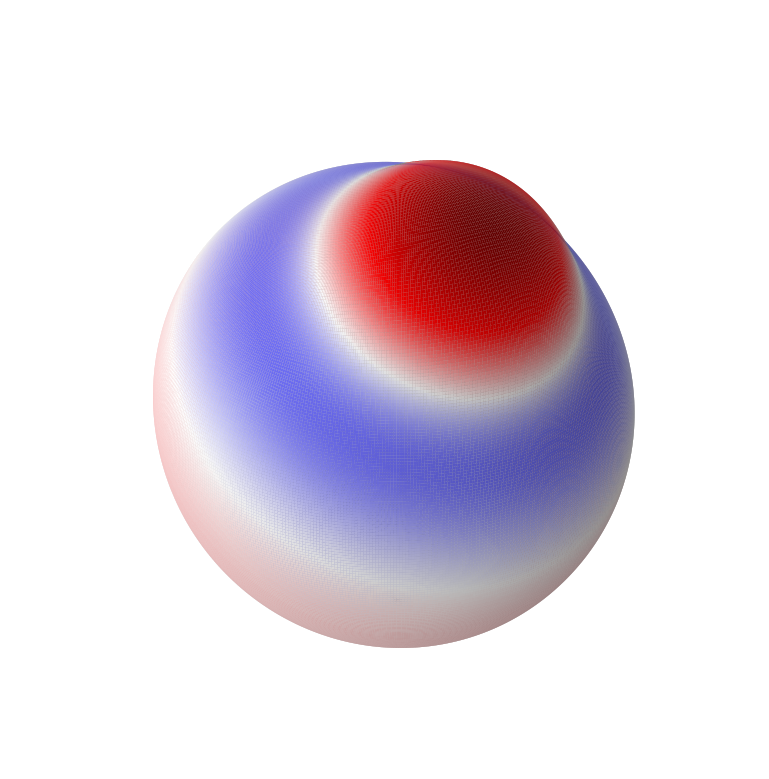} \\[3pt]
 \includegraphics[height=3.2cm]{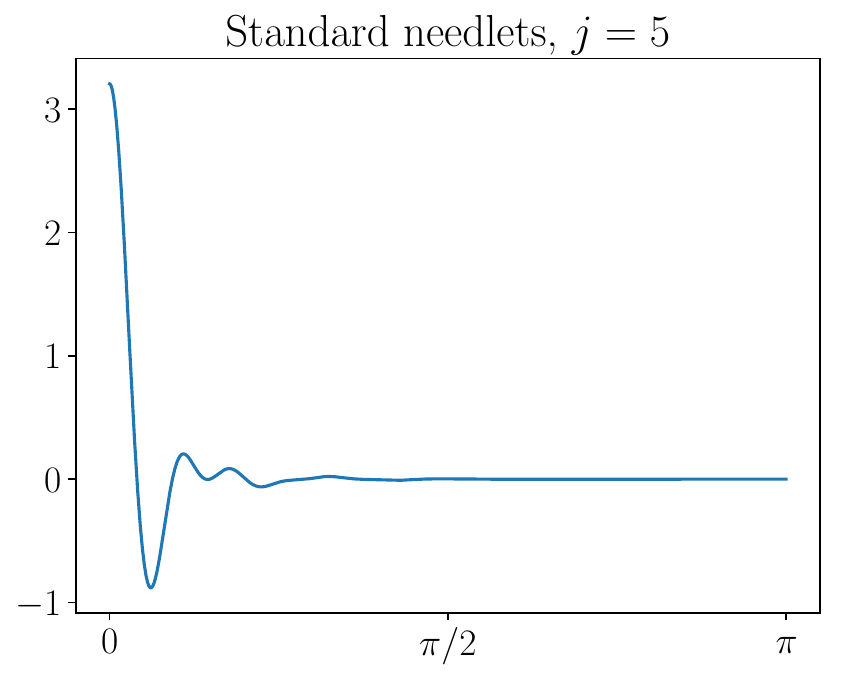}  & \includegraphics[height=3.25cm]{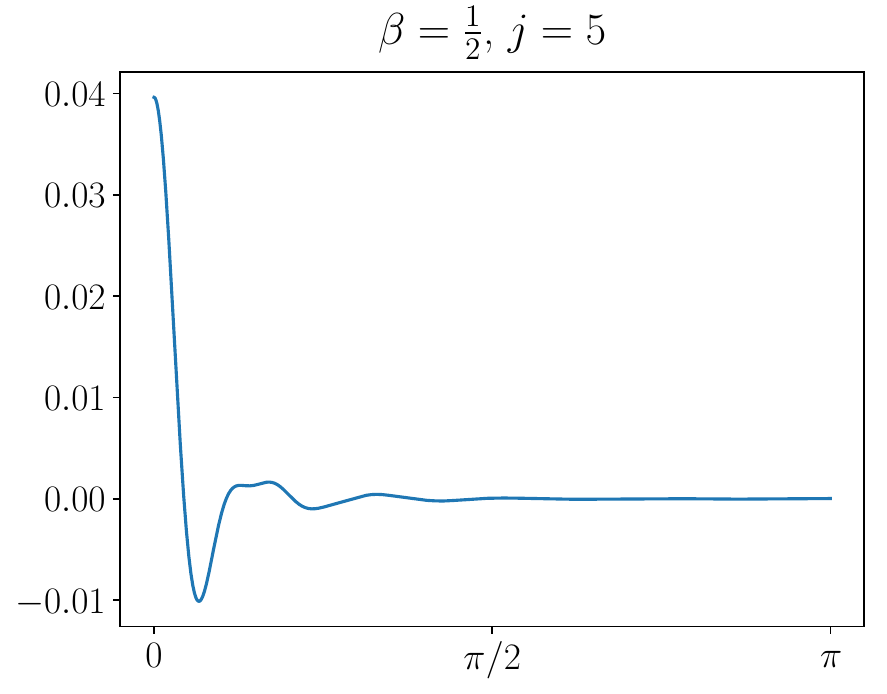} & \includegraphics[height=3.2cm]{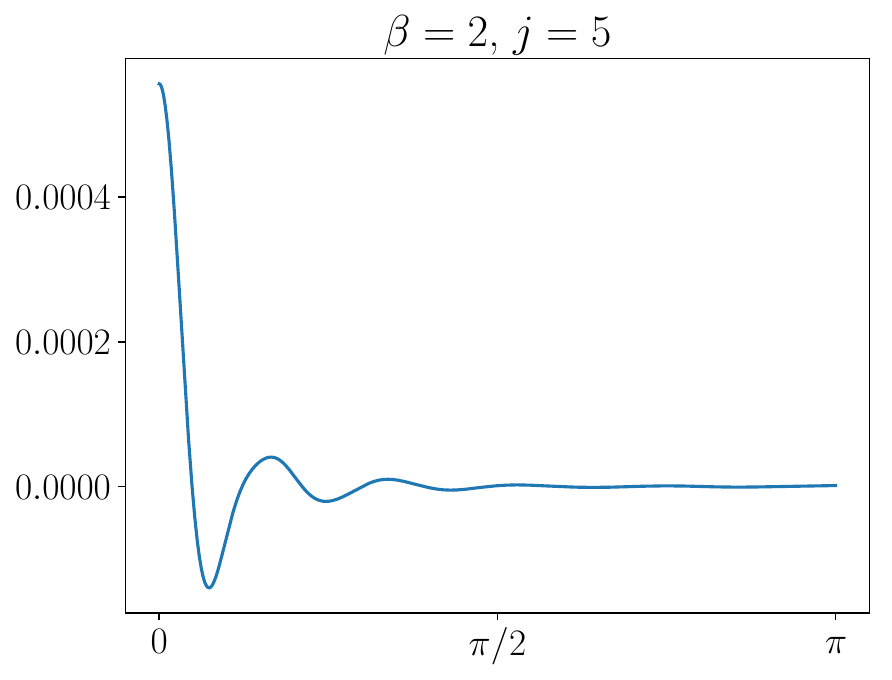} \\[-6pt]
 \includegraphics[width=4cm]{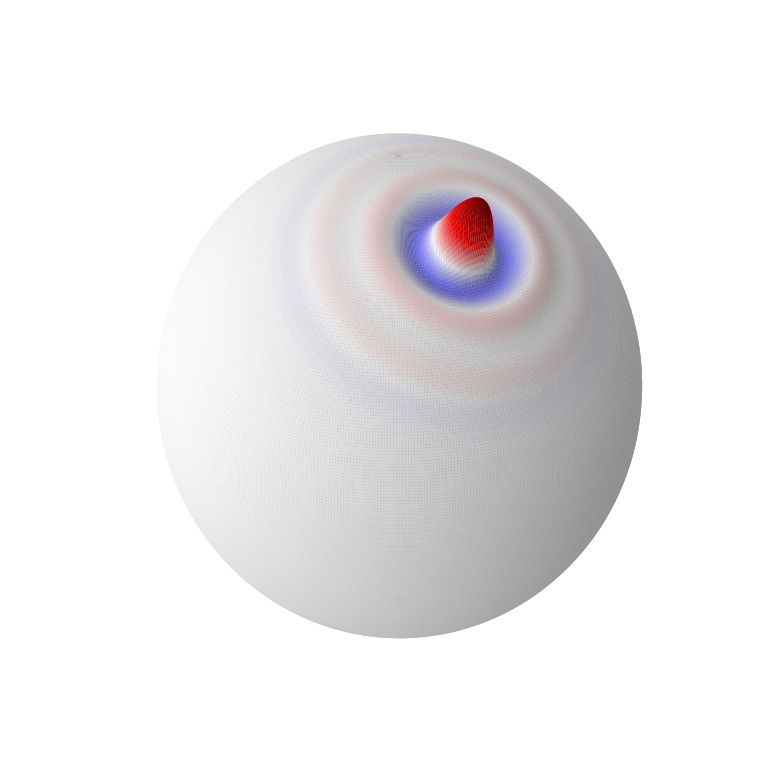} &  \includegraphics[width=4cm]{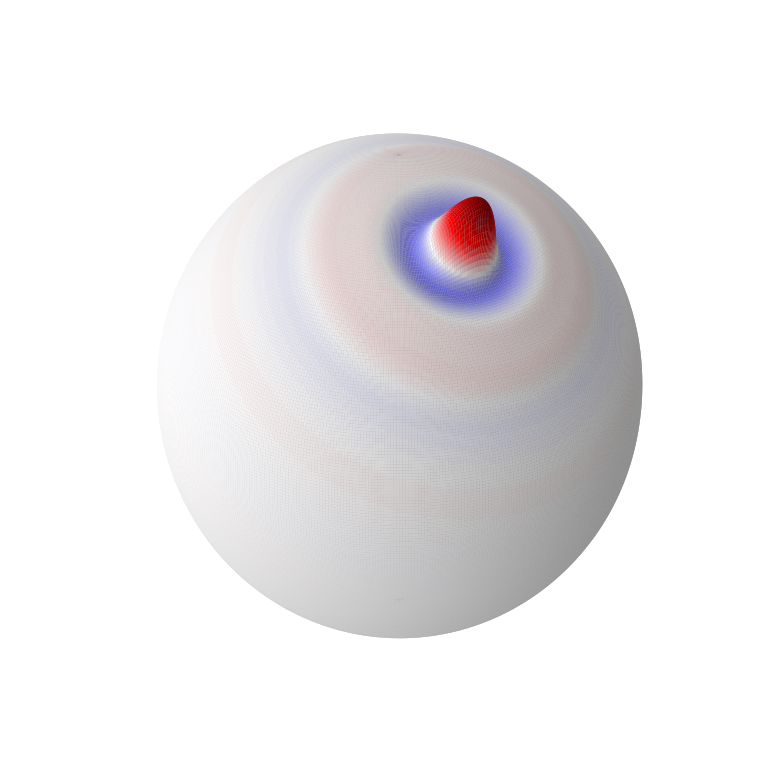} &  \includegraphics[width=4cm]{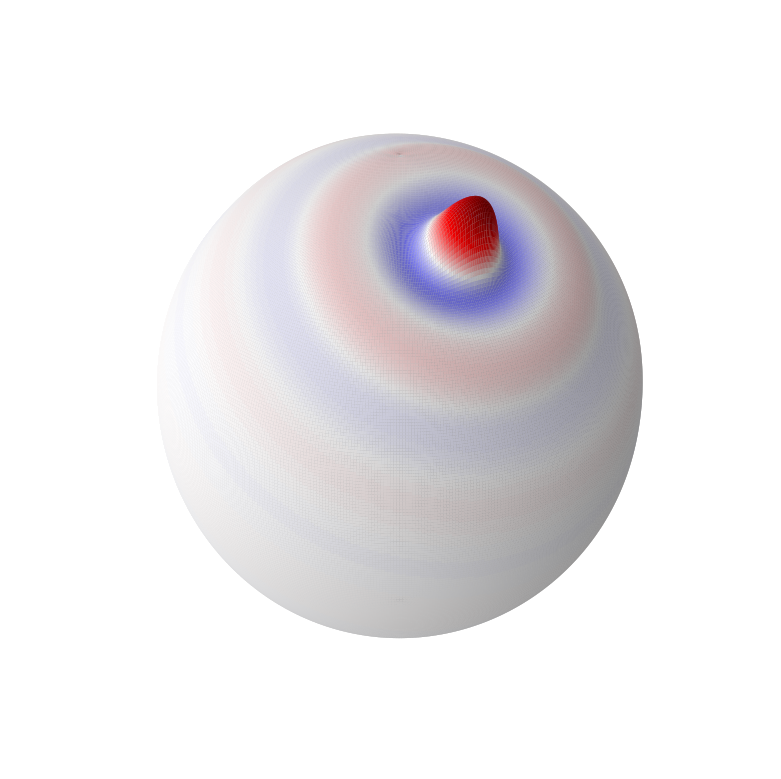} \\[3pt]
 \includegraphics[height=3.2cm]{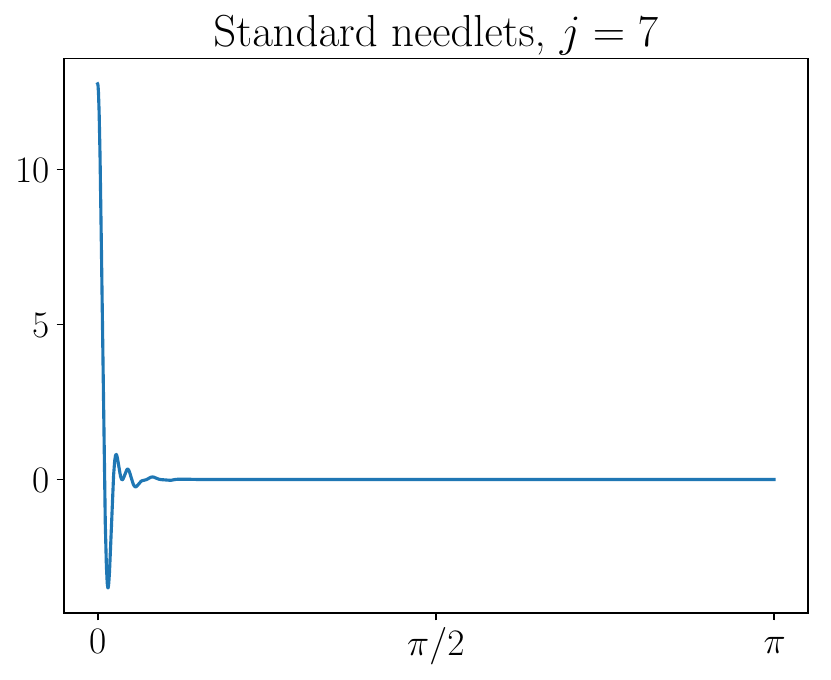}  & \includegraphics[height=3.25cm]{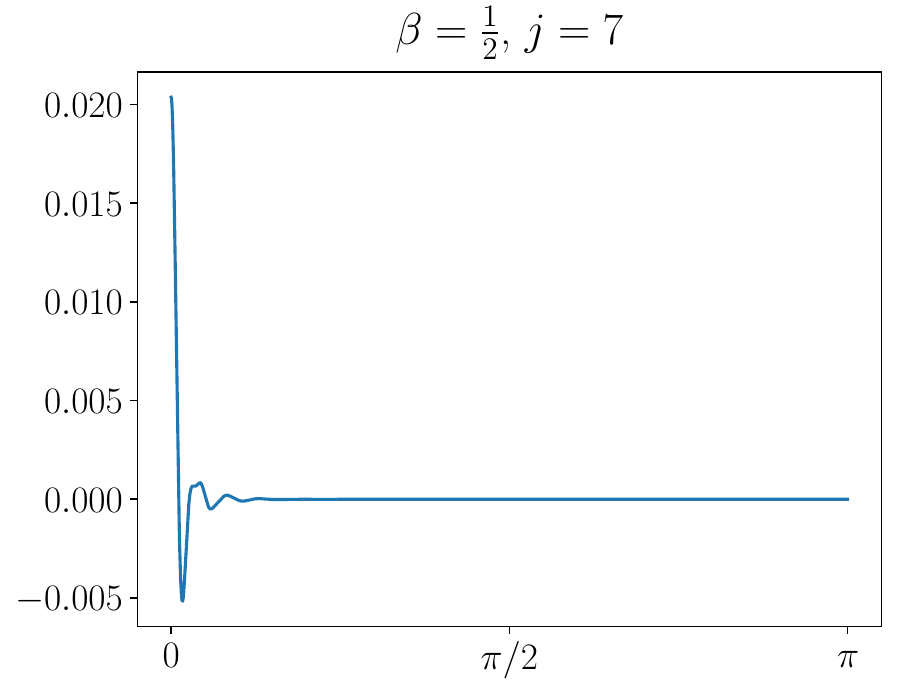} & \includegraphics[height=3.2cm]{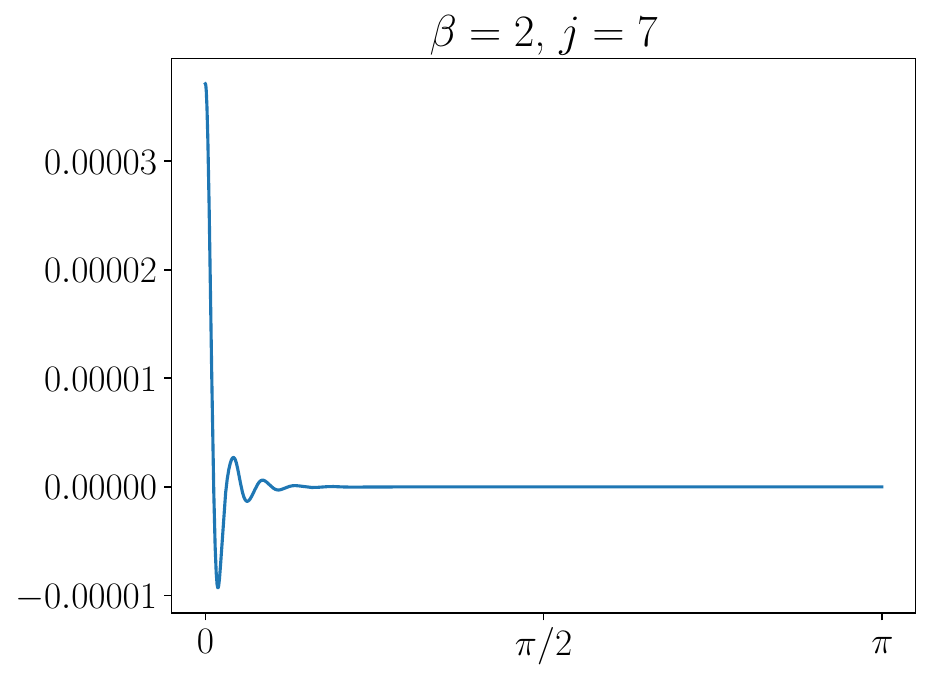} \\[-6pt]
 \includegraphics[width=4cm]{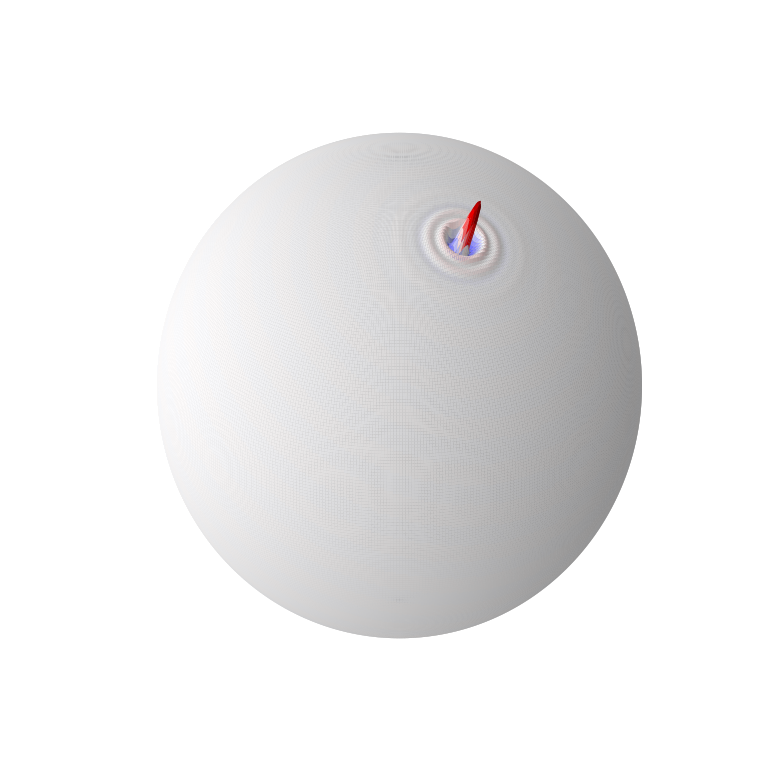} &  \includegraphics[width=4cm]{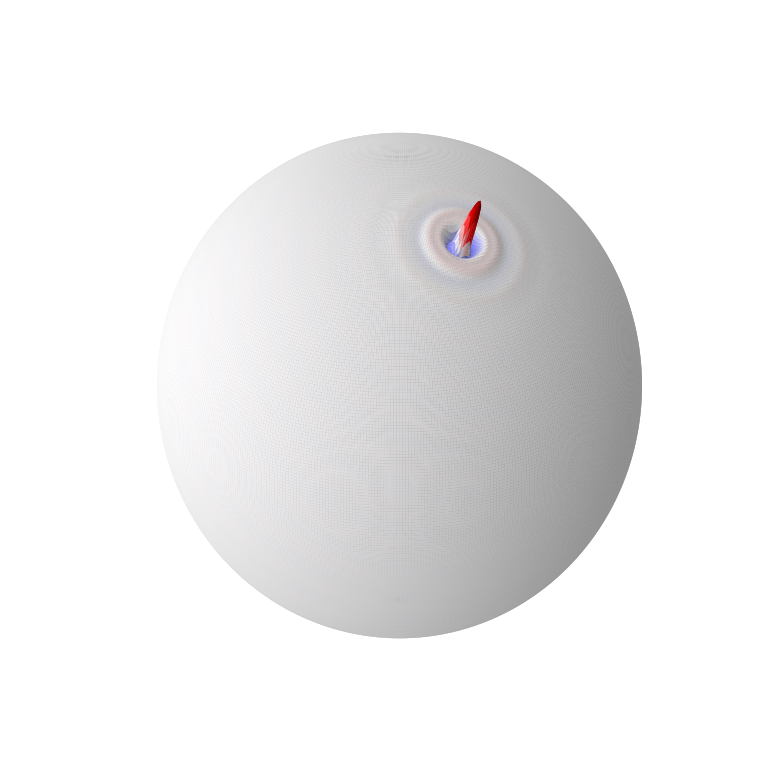} &  \includegraphics[width=4cm]{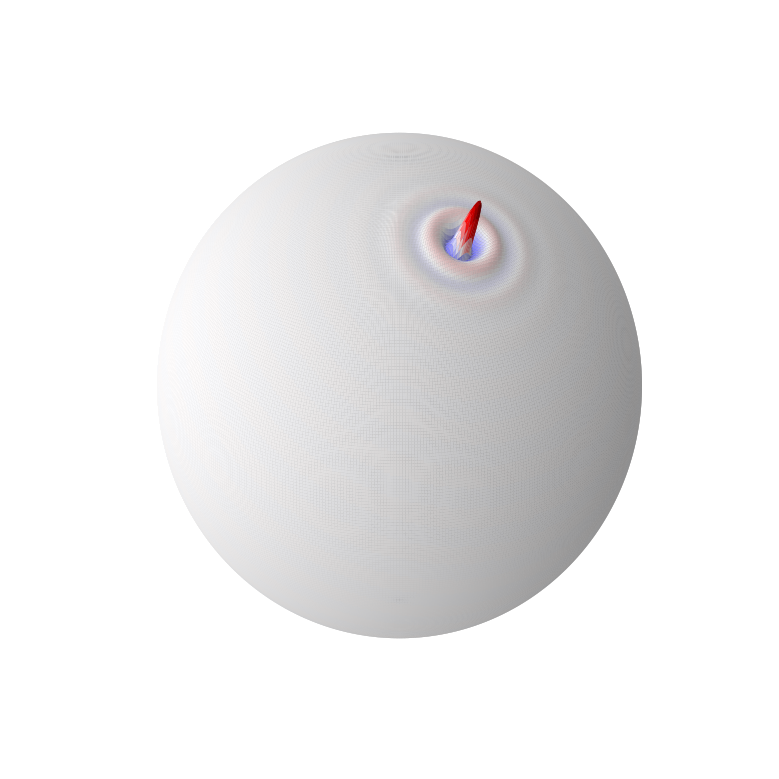} 
\end{tabular}
\caption{Standard needlets (left column) and modified needlets for $A_\ell = (1 + \ell)^{-2(1 + \beta)}$ with $\beta=\frac12$ (middle column) and $\beta = 2$ (right column), each for $j=3,5,7$; shown are the angular component $R_j$ and the corresponding functions on $\bbS^2$. }\label{fig:plots}
\end{figure}

\subsection{{Spline approximations}}\label{sec:splinetrunc}

{The computational costs of evaluating $\psi^\mathsf{A}_{jk}(s)$ for given $s$ can be reduced by approximating the functions $R_j \circ \cos$ by spline interpolants on sufficiently fine grids. The required size of such grids can be quantified based on the following bounds on higher derivatives of these functions.}

\begin{theorem}\label{thm:needletderiv}
{Let Assumptions \ref{assLambda} and \ref{asskappa} hold and for some $\beta>0$, assume that there exists $C>0$ such that $\mathsf{A}=(A_\ell)_{\ell \in \N_0}$ satisfies $A_\ell  \leq C ( 1 + \ell)^{- 2(1 + \beta)}$, $\ell \geq 0$. Then for each $n \in \N_0$, there exists $C>0$ such that
\begin{equation}
   \bigabs{ R_j }_{C^n([-1,1])} \leq C 2^{(2n-\beta)j}, \qquad
    \bigabs{ R_j \circ \cos}_{C^n(\R/2\pi\Z)} \leq C 2^{(n-\beta) j} 
\end{equation}
for $j \in \N_0$, $k = 1,\ldots, n_j$.}
\end{theorem}

\begin{proof}
  {For the ultraspherical polynomials $P^{(\lambda)}_\ell$, we have (see \cite[(4.7.14)]{Sz}) the identity
  \begin{equation}\label{eq:ultraspherical1}  \frac{d}{dt} P^{(\lambda)}_\ell(t) = 2\lambda P^{(\lambda+1)}_{\ell-1}(t)\,,  \end{equation}
  where $P^{(\frac12)}_\ell = P_\ell$. As noted in \cite[Sec.~3.1]{NPW},
  \begin{equation}\label{eq:ultraspherical2} \bigabs{P^{(\lambda)}_\ell(t)} \leq P^{(\lambda)}_\ell(1) = {\ell + 2\lambda - 1\choose \ell} \leq (1 + \ell)^{2\lambda-1} \,. \end{equation}
  Hence, for $\ell \geq n$ we have
  \begin{equation}
\left|\frac{d^n}{dt^n} P_\ell(t)\right| \leq  \Bigl|P^{(\frac{1}{2}+n)}_{\ell-n}(t)\Bigr| \leq (1+\ell-n)^{2n} \,.
\end{equation}
  With our assumption on $A_\ell$ and $n_j$, we thus obtain
  \[ \biggabs{ \frac{d^n}{dt^n}   R_j(t) } \leq C 2^{-j} \sum_{\ell = 2^{j-2}}^{2^j-1}  (1 + \ell)^{-(1+\beta)}(1 + \ell)^{1 + 2n}
   \leq C 2^{(2n - \beta) j} \,. \]
  As a consequence of Fa\`a di Bruno's formula, ${d^n R_j(\cos\theta)}/{d\theta^n}$ can be written as a sum of terms of the form \[ c_{n_1,n_2} \frac{d^{n_1+n_2} R_j(\cos\theta)}{d\theta^{n_1+n_2}} (\sin^{n_1}\theta)( \cos^{n_2} \theta), \] where $n_1 + 2n_2 \leq n$ and where both the coefficients $c_{n_1,n_2}$ and the number of summands satisfy bounds depending only on $n$. Combining Bernstein's inequality for higher derivatives (see \cite[Sec.~5.2.E.5]{BE95}) with \eqref{eq:ultraspherical1} and \eqref{eq:ultraspherical2}, we obtain
  \[ 
    \begin{aligned}
    \left| \frac{d^{n_1+n_2}}{dt^{n_1+n_2}} P_\ell(t) \right|  
    & \leq 2^{n_2} \Bigl( \prod_{i=0}^{n_2-1} (\textstyle\frac12 + i\displaystyle) \Bigr) \left|  \frac{d^{n_1}}{dt^{n_1}}  P^{(\frac12 + n_2)}_{\ell-n_2}(t)  \right| \\
    & \leq  2^{n_2} \Bigl( \prod_{i=0}^{n_2-1} (\textstyle\frac12 + i\displaystyle) \Bigr) \left(\frac{2\ell}{\sqrt{1-t^2}}\right)^{n_1} \sup_{t\in[-1,1]} \bigabs{ P^{(\frac12 + n_2)}_{\ell-n_2}(t) }  \leq  \frac{C (1+\ell)^{n_1 + 2 n_2}}{(\sqrt{1-t^2})^{n_1}} \,, 
    \end{aligned}\]
  and thus using $\sqrt{1 - \cos^2 \theta} = \sin \theta$ as well as \eqref{lambdabound} gives
  \[ \biggabs{ \frac{d^n}{d\theta^n}   R_j(\cos\theta) } \leq C 2^{-j} \sum_{\ell = 2^{j-2}}^{2^j-1}  (1 + \ell)^{-(1+\beta)} (1 + \ell)^{1+n_1+2n_2} \leq C 2^{(n - \beta)j} \,. \qedhere  \]
  }
\end{proof}

{Let $\mathcal{S}_{n,\tau}$ denote the space of continuous $2\pi$-periodic spline functions on $[0,2\pi)$ of piecewise polynomial degree $n \in \N$ with grid spacing $\tau>0$. Then as a consequence of Theorem \ref{thm:needletderiv}, 
\begin{equation}\label{eq:splineapprox}
  \min_{S \in \mathcal{S}_{n,\tau}} \bignorm{ (R_j\circ\cos) - S}_{C(\R/2\pi\Z)} \leq C \tau^{n+1} 2^{(n+1-\beta)j}\,.
\end{equation}
For reducing the cardinality of such grids, in particular for large $j$, the localisation of the needlets can be exploited in a further approximation, replacing $R_j$ by radially truncated functions $R_{j,c} = \Chi_{[\cos c, 1]}R_j$ with suitable $c \in (0,\pi]$.
By \eqref{modlocalizationestimate}, for any $\varepsilon >0$, we obtain $\norm{ R_j - R_{j,c} }_{C([-1,1])} \leq \varepsilon$ provided that
\begin{equation}\label{eq:truncsupp}
   c \geq C^{\frac1r} \varepsilon^{-\frac1r} 2^{- (\frac{\beta}r + 1)j} \,
\end{equation}
where $C$ may depend on $\beta$ and $r$.}

\section{{Applications}}

\subsection{{Random PDEs on $\bbS^2$}}\label{sec:pde}

In problems of uncertainty quantification, one is often interested in partial differential equations with random fields as coefficients. Series expansions of these random fields can be used in the construction of deterministic approximations of corresponding random solutions. 
We now give an application of the expansions \eqref{modneedletexpansion} to elliptic PDEs on the sphere with lognormal coefficients. {The numerical analysis of of lognormal diffusion problems on the sphere using the KL expansion has been considered also in \cite{HLS18}.}
Here we adapt the results for analogous problems on bounded domains in \cite{BCDM}. To simplify notation, we set $\Gamma := \bbS^2$.
We consider the diffusion problem
\begin{equation}\label{eq:Poisson}
- \nabla_\Gamma \cdot (a \nabla_\Gamma q) = f
\end{equation}
in the following weak formulation: for given $a \in L_\infty(\Gamma)$ with $a>0$ a.e.\ such that $a^{-1} \in L_\infty(\Gamma)$ as well as
\[
  f \in   L_{2,0}(\Gamma) := \biggl\{  f\in L_2(\Gamma) \colon \int_{\Gamma} f \dS = 0   \biggr\},
\]
find $q \in V := H^1(\Gamma) \cap L_{2,0}(\Gamma)$ such that
\begin{equation}\label{eq:poissonweak}
   \int_{\Gamma} a \nabla_{\Gamma} q \cdot \nabla_{\Gamma} \phi \dS = \int_{\Gamma} f\,\phi  \dS , \quad
    \phi \in V.
\end{equation}
By Poincare's inequality on $\Gamma$ (see \cite{DE}),
\begin{equation}\label{apriori}
\|  q \|_{L_2(\Gamma)} \leq C_\mathrm{P} \| \nabla_{\Gamma} q \|_{L_2(\Gamma)},  \quad  q \in V,
\end{equation}
with a $C_{\mathrm{P}} >0$,
and by the Lax-Milgram theorem
we obtain the existence and uniqueness of the weak solution $q$ and the a priori bound
\begin{equation}\label{solutionbound}
   \norm{q}_{V} \leq C_\mathrm{P} \norm{ a^{-1} }_{L^\infty(\Gamma)} \norm{f}_{L^2(\Gamma)}.
\end{equation}

We consider lognormal random coefficients $a = \exp( u )$, where $u$ is an isotropic Gaussian random field on $\bbS^2$. By Theorem \ref{thm:modframe}, we have the representation
\begin{equation}\label{modneedletexpansion2}
   u = \sum_{(j, k) \in  \mathcal{J}} y_{j, k}  \psi^\mathsf{A}_{jk}
\end{equation}
with i.i.d.\ coefficients $y_{j,k} \sim \cN (0,1)$, where $\psi^\mathsf{A}_{jk}$ are defined as in \eqref{modifiedneedlets} and where $\mathsf{A}$ is the power spectrum of $u$. The aim is now the efficient approximation of the $V$-valued random variable defined by the weak solution $q$ for each realisation of the scalar random coefficients $y_{j,k}$, 
\[
   y \mapsto q(y) \in V, \quad y = (y_{j,k})_{(j,k)\in\mathcal{J} }.
\]
Here the random vector $y$ is distributed according to the product measure 
\[
  \gamma := \bigotimes_{ (j,k)\in\mathcal{J}} \mathcal{N}(0,1)
\]
on $U := \R^\N$.

The further analysis of integrability and approximability of the random solutions $q(y)$ hinges on summability properties of the functions $\psi_{jk}^\mathsf{A}$.
Using the localisation estimate from Theorem \ref{modifieddecay}, we now verify such a summability condition on the sphere that is analogous to the condition \cite[eq.~(16)]{BCM} on a bounded domain.

\begin{cor}\label{psisumdecay}
Under the assumptions of Theorem \ref{modifieddecay}, there exists $C>0$ such that for each $j$,
\begin{equation}\label{uniformPoints}
    \sup_{s \in \bbS^2} \sum_{k = 1}^{n_j} \abs{\psi^\mathsf{A}_{jk}(s)} \leq C 2^{- \beta j}\,.
\end{equation}
\end{cor}

\begin{proof}
For fixed $s \in \bbS^2$ and $j \in \N_0$, we define the sets  
\begin{equation}\label{Xindef}
  \Xi_n = \{  k \colon  (n-1) h_j \leq d(s,\xi_{jk}) \leq n h_j  \}, \quad n \in \N.
\end{equation}
While these are empty for sufficiently large $n$, for our purposes it suffices to observe that due to \eqref{meshratio}, we have
\[
  \# \Xi_n \leq c n , \quad n \in \N,
\]
with $c>0$ independent of $s$ and $j$.
With \eqref{modlocalizationestimate}, we obtain
\[
\begin{aligned}
   \sum_{k = 1}^{n_j} \abs{\psi^\mathsf{A}_{jk}(s)}
    & \leq  C 2^{- \beta j} \sum_{n \in \N} \sum_{k \in \Xi_n} \frac{1}{1 + \bigl(2^j d(s,\xi_{jk})\bigr)^r}  \\
    & \leq  c C 2^{-\beta  j} \sum_{n \in \N} \frac{n}{ 1 + \bigl(2^j h_j (n-1)\bigr)^r} .
 \end{aligned}
\]
By \eqref{hdecrease}, there exists $c_0>0$ such that $2^j h_j \geq c_0$ for all $j$, and since $r>2$ by Assumption \ref{asskappa},
\[
  \sum_{n \in \N} \frac{n}{ 1 + \bigl(2^j h_j (n-1)\bigr)^r} \leq \sum_{n \in \N} \frac{n}{ 1 + \bigl(c_0(n-1)\bigr)^r} < \infty,
\]
completing the proof.
\end{proof}

From \eqref{solutionbound}, we obtain
\[
\| q(y)\|_{V} \leq C_\mathrm{P} \| f\|_{L^2(\Gamma)} \exp \biggl(  \Bignorm{ \sum_{j, k} y_{j, k} \psi^\mathsf{A}_{jk} }_{L_\infty(\Gamma)} \biggr),
\]
provided that the right hand side is defined for the given $y$.
From \eqref{uniformPoints}, proceeding exactly as in \cite[Cor.~2.3]{BCDM}, one obtains that a solution $q \in V$ exists for $\gamma$-almost every $y$ and moreover, $q \in L_p(U, V, \gamma)$ for any $p<\infty$. 

Since in particular $q \in L_2(U, V, \gamma)$, we have an expansion of $q$ in terms of product Hermite polynomials.
Let $\cF$ be the set of finitely supported sequences of non-negative integers $\nu=(\nu_{j,k})_{(j,k) \in \mathcal{J}} \in \N_0^\mathcal{J}$ and let $(H_n)_{n\geq 0}$ be the sequence of univariate Hermite polynomials normalised with respect to the density of $\mathcal{N}(0,1)$.
The expansion of $q$ with respect to the orthonormal basis $\{ H_\nu \}_{\nu \in \cF}$ with 
$H_\nu(y) := \prod_{(j,k) \in \mathcal{J}} H_{\nu_{j,k}}(y_{j,k})$ of $L_2(U,\gamma)$ then reads
\begin{equation}
q(y) = \sum_{\nu \in \cF} q_\nu H_\nu(y) ,\qquad q_\nu=\int_U q(y)\,H_\nu(y)\sdd\gamma(y) \in V ,
\end{equation}
with convergence in $L_2(U, V, \gamma)$.

Let $(\nu^*_k)_{k\in\N}$ be an enumeration of $\cF$ such that $\norm{ q_{\nu^*_1} }_V \geq \norm{ q_{\nu^*_2} }_V \geq \ldots$, and for each $n \in \N$, let $\Lambda_n = \{ \nu^*_1,\ldots,\nu^*_n \}$. With this index set corresponding to $n$ largest values of $\norm{ q_\nu }_V$, a best $n$-term approximation of $q$ is given by
\begin{equation}\label{bestnterm}
   q_n(y)  :=  \sum_{\nu \in \Lambda_n} q_\nu H_\nu(y) .
\end{equation}

Under the levelwise decay condition \eqref{uniformPoints}, we apply on $\Gamma$ exactly the same steps as carried out in \cite{BCDM} on bounded domains (see also \cite{BCM} for a summary) to arrive at the following convergence result for best $n$-term approximations based on the expansion \eqref{modneedletexpansion2} of the random field $u$.

\begin{theorem}\label{thm:hermiteconv}
 Under the assumptions of Theorem \ref{modifieddecay}, for each $s \in ( 0 ,\beta /2)$ there exists $C_{q,s} >0$ such that the $q_n$ as in \eqref{bestnterm} satisfy 
 \[
    \norm{ q - q_n }_{L_2(U,V,\gamma)} \leq C_{q,s} n^{-s}.
 \]
\end{theorem}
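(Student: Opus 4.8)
The plan is to reduce the claim to the abstract best $n$-term approximation theory for lognormal parametric problems developed in \cite{BCDM}, whose hypotheses are exactly of the levelwise form \eqref{uniformPoints} that the preceding corollary supplies. The backbone of such results is the holomorphic dependence of the solution map $y \mapsto q(y)$ on the parameters together with a summability estimate for the norms $\norm{q_\nu}_V$ of the Hermite coefficients; once $(\norm{q_\nu}_V)_{\nu\in\cF}$ is shown to lie in a suitable $\ell^p(\cF)$ with $p<2$, the rate follows from Stechkin's lemma applied in $\ell^2(\cF)$, since by Parseval $\norm{q - q_n}_{L_2(U,V,\gamma)}^2 = \sum_{\nu\notin\Lambda_n}\norm{q_\nu}_V^2$ for the best $n$-term sets $\Lambda_n$ used in \eqref{bestnterm}.

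Concretely, I would first record that $q \in L_2(U,V,\gamma)$, already established above, so that the Hermite expansion is valid and Parseval applies. The heart of the matter is then to bound the coefficients $q_\nu$. Following \cite{BCDM}, this is done by extending $y \mapsto q(y)$ holomorphically to a complex polystrip: for admissible complex $z$ one estimates $\norm{q(z)}_V$ through \eqref{solutionbound} by a quantity of the form $\exp(\norm{\sum_{jk} z_{jk}\psi^\mathsf{A}_{jk}}_{L_\infty(\Gamma)})$, and the Hermite coefficients are controlled by integrating this extension against the Gaussian weights. The only structural input this requires about the family $\{\psi^\mathsf{A}_{jk}\}$ is the levelwise bound \eqref{uniformPoints}: it both guarantees finiteness of the exponential moments entering the integrability and encodes the \emph{finite overlap} of the needlets at each level, which is precisely what permits the sharp summability exponent.

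The rate bookkeeping then mirrors \cite{BCDM} verbatim. One introduces level-dependent weights and measures the summability of $(\norm{q_\nu}_V)_\nu$ against the levelwise decay $2^{-\beta j}$ from \eqref{uniformPoints} and the cardinality bound $n_j \leq C 2^{2j}$ from Assumption \ref{assLambda}. Because the needlets have supports of measure $\sim 2^{-2j}$ with bounded overlap, the admissible summability exponent $p$ can be pushed to the threshold for which $1/p - 1/2$ approaches $\beta/2$; here $2$ is the dimension of $\bbS^2$, so this is the expected multilevel rate $\beta/d$ with $d=2$. Feeding the resulting $\ell^p$ summability into Stechkin's lemma yields $\norm{q-q_n}_{L_2(U,V,\gamma)} \leq C_{q,s} n^{-s}$ for every $s < \beta/2$, with the constant depending on $q$ and $s$ but not on $n$.

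I expect the main obstacle to be the summability estimate itself, i.e.\ translating \eqref{uniformPoints} into $\ell^p(\cF)$ membership of $(\norm{q_\nu}_V)_\nu$ with the sharp exponent; this is the genuinely technical core, and it is where the finite-overlap content of \eqref{uniformPoints} (rather than a merely global $\ell^1$-type bound) must be exploited to avoid losing half a power and to reach $\beta/2$ instead of $\beta/2 - 1/2$. By contrast, the transfer from the bounded-domain setting of \cite{BCDM} to $\Gamma = \bbS^2$ is routine: the abstract argument uses only the Lax--Milgram bound \eqref{solutionbound}, the product Gaussian structure of $\gamma$, and the levelwise estimate \eqref{uniformPoints}, all of which are available here, so no step of \cite{BCDM} needs to be re-proved rather than cited.
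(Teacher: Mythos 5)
Your proposal matches the paper's own proof, which consists precisely of the observation that the levelwise bound \eqref{uniformPoints} (decay $2^{-\beta j}$ with $n_j \lesssim 2^{2j}$ functions per level) places the problem within the hypotheses of the best $n$-term Hermite approximation theory of \cite{BCDM}, so that the weighted summability of $(\norm{q_\nu}_V)_\nu$ and Stechkin's lemma give the rate $s<\beta/d$ with $d=2$. The only minor inaccuracy is your description of the mechanism in \cite{BCDM} as a holomorphic polystrip extension: for lognormal coefficients the relevant coefficient bounds there are obtained from real-variable estimates on the derivatives $\partial^\nu q$, but this does not affect the validity of your argument.
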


In other words, the best $n$-term product Hermite polynomial approximations $q_n$ converge in $L_2(U,V,\gamma)$ as $\mathcal{O}(n^{-s})$ for any $s$ up to $\beta/2$. As noted in Remark \ref{rem:decayregularity}, under the assumptions of Theorem \ref{modifieddecay}, $\beta$ is precisely the limiting order of H\"older regularity of the random field $u$ (and hence of $a$), that is, there exists a modification such that realisations are in $C^{\beta'}(\Gamma)$ for any $\beta' < \beta$.
{Note that here we rely crucially on the localisation of the random field expansion expressed in \eqref{uniformPoints}, and no such convergence result is available for KL expansions in spherical harmonics.
Noting that $\Gamma$ has dimensionality $2$, our new result parallels the one of \cite{BCM} for domains $D\subset \R^d$:} for random fields of H\"older regularity up to order $\beta$, the wavelet expansions constructed there yield $n$-term Hermite approximations of solutions that yield an error of order $\mathcal{O}(n^{-s})$ for any $s \in (0, \beta /d)$ in $L_2(U, H^1_0(D), \gamma)$.

\subsection{{Convergence of truncated expansions and sampling of random fields}}
{Under the assumptions of Theorem \ref{modifieddecay}, for an isotropic Gaussian random field $u$ with power spectrum $\mathsf{A}$, one may also be interested in a specific type of convergence of the partial random series 
\begin{equation}\label{partsum}
   u_J(y) = \sum_{\substack{(j, k) \in  \mathcal{J} \\ j \leq J}} y_{j, k}  \psi^\mathsf{A}_{jk}\,.
\end{equation} 
As we have noted, truncation of the KL expansion \eqref{genKL} in particular yields the fastest convergence with respect to the number of terms in the mean-squared sense, that is, in $L_2(U,L_2(\bbS^2),\gamma)$. As we now show, the alternative expansions with localization as in \eqref{partsum} are especially suitable for obtaining convergence in $C(\bbS^2)$.}

{Based on the estimate \eqref{uniformPoints}, one can estimate $\norm{ u(y) - u_J(y)}_{C(\bbS^2)}$ in various ways; we now illustrate this for convergence in $L_p(U,C(\bbS^2),\gamma)$ for $p\in(0,\infty)$.
Following \cite[Thm.~2.2]{BCDM}, it is not difficult to see that for any sequence of positive real numbers $(\omega_{j,k})_{(j,k)\in \mathcal{J}}$ such that $\sum_{(j,k) \in \mathcal{J}} \exp(-\omega_{j,k}^2) < \infty$, we have 
\[ \int_U  \bigl( \sup_{(j,k)\in\mathcal{J}} \omega_{j,k}^{-1} \abs{y_{j,k}} \bigr)^p \sdd \gamma(y) < \infty\,. \]
With $\beta>0$ as in \eqref{uniformPoints}, let $\omega_{j,k} = \sqrt{3 \ln 2^{j+1}}$. Then by \eqref{uniformPoints},
\begin{equation}\label{uJest}
\begin{aligned}
   \norm{u(y) - u_J(y)}_{C(\bbS^2)} &\leq  \biggl( \sup_{j>J}  2^{\beta j} \Bignorm{ \sum_{k=1}^{n_j} \bigabs{\psi^\mathsf{A}_{jk}}}_{C(\bbS^2)} \biggr)\biggl( \sum_{ j > J } 2^{-\beta j} \sup_{k=1,\ldots,n_j} \abs{y_{j,k}} \biggr) \\
   &\leq C  \Bigl( \sup_{(j,k) \in \mathcal{J}} \omega_{j,k}^{-1} \abs{y_{j,k}}  \Bigr) \sum_{j > J} \sqrt{j+1} \,2^{-\beta j}
\end{aligned}
\end{equation}
with $C>0$ independent of $J$ and $y$, which implies
\begin{equation}
  \norm{u - u_J}_{L_p(U,C(\bbS^2),\gamma)} \leq C 2^{-( \beta - \delta ) J} 
\end{equation}
with some $C>0$ independent of $J$, for any $\delta \in (0,\beta)$. In other words, with a partial sum of $N = \mathcal{O}(2^{2J})$ terms we obtain an error bound in $L_p(U,C(\bbS^2),\gamma)$ of order $\mathcal{O}(N^{-(\beta-\delta)/2})$ for any $\delta>0$. Note that under the given assumptions, this is precisely the same rate of convergence with respect to the number of terms as guaranteed by \cite[Thm.~2.2]{HLS18} for KL expansions in spherical harmonics \eqref{isotropicreal}.}

{In order to improve the efficiency of the numerical evaluation of \eqref{partsum}, we introduce additional spline approximations of the functions $\psi_{jk}^\mathsf{A}$ as described in Section \ref{sec:splinetrunc} and take advantage of their localisation as in \eqref{eq:truncsupp}. To this end, we replace $u_J$ by
\begin{equation}\label{tildeuJdef}
  \tilde u_J(y) = \sum_{\substack{(j, k) \in  \mathcal{J} \\ j \leq J}} y_{j, k}  \tilde\psi^\mathsf{A}_{jk}, \qquad    \tilde\psi^\mathsf{A}_{jk}(s) = \begin{cases} S_j(d(s, \xi_{jk})) , & d(s,\xi_{jk}) \leq \rho_j, \\ 0, &\text{otherwise,}  \end{cases}
\end{equation}
with suitably chosen $\rho_j \in (0,\pi]$, $j=0,\ldots,J$, and where $S_j \in \mathcal{S}_{n,\tau_j}$ are continuous spline approximations of $R_j\circ \cos$ of piecewise polynomial degree $n\in\N$ with uniform distance $\tau_j>0$ between knots.}

\begin{prop}\label{prop:localizedsum}
{Under the assumptions of Theorem \ref{modifieddecay}, with $\rho_j = 2^{\frac{\beta}{r-2} J} h_j$ and $\tau_j = 2^{-j} 2^{-\frac{\beta}{n+1} ( 1 + \frac{2}{r-2})J}$ for $j \in \N_0$, we have
\[
 \norm{ u - \tilde u_J }_{L_p(U,C(\bbS^2),\gamma)} \leq  C \sum_{j > J} \sqrt{j+1} \,2^{-\beta j}  ,
\]
where $\# \bigl\{ (j,k)\in\mathcal{J} \colon j\leq J, \tilde\psi^\mathsf{A}_{jk}(s) \neq 0\bigr\} \leq C 2^{\frac{2\beta}{r-2} J}$ for each $s \in \bbS^2$.}
\end{prop}

\begin{proof}
{Since $\norm{ u - u_J }_{L_p(U,C(\bbS^2),\gamma)}$ satisfies the stated estimate by \eqref{uJest}, it suffices to obtain the same bound for $\norm{ u_J - \tilde u_J }_{L_p(U,C(\bbS^2),\gamma)}$.
Note that with $\Xi_n$ as in \eqref{Xindef}, 
\[
  \# \bigl\{ (j,k)\in\mathcal{J} \colon j\leq J, \tilde\psi^\mathsf{A}_{jk}(s) \neq 0\bigr\} 
   \leq \sum_{n=1}^{\eta_j + 1} \# \Xi_n \leq c \sum_{n=1}^{\eta_j+1} n \leq  C 2^{\frac{2\beta}{r-2} J}. 
\]
Writing $\rho_j = \eta_j h_j$ with $\eta_j \geq 1$, by the same arguments as in the proof of Corollary \ref{psisumdecay}, we obtain
\[
  \begin{aligned}
 \Bignorm{\sum_{k=1}^{n_j} \bigabs{\psi^\mathsf{A}_{jk} - \tilde\psi^\mathsf{A}_{jk}}}_{C(\bbS^2)}   
   & \leq C \biggl( 2^{\frac{2\beta}{r-2} J} \tau_j^{n+1} 2^{(n+1-\beta) j} + 2^{-\beta j} \sum_{n\geq 0} \frac{\eta_j + n}{1 + (\eta_j + n)^r} \biggr) \\
   & \leq C 2^{-\beta j} \bigl( 2^{-\beta J} + \eta_j^{-(r-2)}  \bigr),
  \end{aligned}
\]
where we have used \eqref{eq:splineapprox} and the choice of $\tau_j$.
With $\omega_{j,k}$ as in \eqref{uJest}, proceeding as there leads to the estimate
\[
  \norm{u_J(y) - \tilde u_J(y)}_{C(\bbS^2)}
  \leq C  \Bigl( \sup_{(j,k) \in \mathcal{J}} \omega_{j,k}^{-1} \abs{y_{j,k}}  \Bigr) \sum_{j = 0}^J \sqrt{j+1} \,2^{-\beta j} \bigl( 2^{-\beta J} + \eta_j^{-(r-2)}  \bigr) .
\]
With $\eta_j = 2^{\frac{\beta}{r-2} J}$, we have
\[
  \sum_{j = 0}^J \sqrt{j+1} \,2^{-\beta j}\bigl( 2^{-\beta J} + \eta_j^{-(r-2)}  \bigr) \leq C \sum_{j= J+1}^{2J+1} \sqrt{j+1} \, 2^{-\beta j},
\]
completing the proof of the first estimate. 
}
\end{proof}

{To conclude, we now discuss the implications of Proposition \ref{prop:localizedsum} on the use of the approximations $\tilde u_J$ for the approximate sampling of the random field $u$.}

\begin{remark}[{Computational costs of sampling}]
{Note that with the cutoff function $\kappa$ chosen as in \eqref{eq:kappaexample}, the maximum admissible value of $r$ in Theorem \ref{modifieddecay} is limited only by the power spectrum $\mathsf{A}$ of any given random field, and $\mathsf{A}$ also determines the value of $\beta$. The total computational costs for using $\tilde u_J$ for approximate random sampling comprise the costs for precomputing certain quantities once, \emph{independently} of any random field; for precomputing the terms in the expansion $\tilde u_J$ for a random field specified by $\mathsf{A}$; and finally, the costs for a single point evaluation of a realization: 
\begin{enumerate}[(i)]
\item As noted in Sec.~5.1, the quadrature weights $\lambda_{jk}$ and points $\xi_{jk}$ need to be precomputed independently of the random field. Suitable high-quality equal-weight point sets (spherical $t$-designs), obtained by specialized optimisation methods, are available in the literature (see \cite{W}). 
\item For each given $\mathsf{A}$, one needs to once precompute the terms in the approximation \eqref{tildeuJdef}, that is, build the spline approximants $S_j$ for $j=0,\ldots, J$. As a consequence of Proposition \ref{prop:localizedsum}, for each $j$ this requires $\mathcal{O}(2^{\beta (\frac{1}{n+1} + \frac{3}{r-2}) J})$ point evaluations of $R_j$, and each such evaluation requires $\mathcal{O}(2^j)$ arithmetic operations.    
\item With this preparation, $\tilde\psi^\mathsf{A}_{jk}(s)$ can be evaluated at unit cost for each $j=0,\ldots,J$, $k=1,\ldots,n_j$, and $s \in \bbS^2$. Since for each $s$, the number of indices $(j,k)$ for which $\tilde\psi^\mathsf{A}_{jk}(s)$ is nonzero is of order $\mathcal{O}(2^{\frac{2\beta}{r-2}J})$, evaluating the approximation of $u(s)$ for arbitrary $s$ requires $\mathcal{O}(2^{\frac{2\beta}{r-2}J})$ operations.
\end{enumerate}
Let us now assume $J$ to be chosen to satisfy $\norm{ u - \tilde u_J }_{L_p(U,C(\bbS^2),\gamma)} \leq \varepsilon$ for a given tolerance $\varepsilon$, then the precomputation of $\tilde u_J$ takes 
\[ \mathcal{O}(\varepsilon^{-\frac{1}{\beta} - \frac{1}{n+1} - \frac{3}{r-2}} \sqrt{1 + \abs{\log \varepsilon}}) \] operations, and evaluating a realization at an arbitrary point on $\bbS^2$  subsequently costs \[ \mathcal{O}(\varepsilon^{-\frac{2}{r-2}}\sqrt{1+\abs{\log\varepsilon}})\] operations.}
\end{remark}

{The truncated needlet-type expansions \eqref{tildeuJdef} are thus suitable for sampling isotropic Gaussian random fields on $\bbS^2$ mainly for large values of $r$. In the case of Mat\'ern-like random fields in \eqref{materndecay} with $A_\ell = c (1 + \ell)^{-2(1+\beta)}$ with $c>0$ for $\ell\geq 0$, the above considerations apply with any $r \in \N$. As a consequence, by using spline approximations of sufficiently high order $n$, one can get arbitrarily close to precomputation costs scaling as $\mathcal{O}(\varepsilon^{-1/\beta})$ and costs per sample that increase more slowly than any negative power of $\varepsilon$ as $\varepsilon\to 0$.
Compared to dedicated methods for sampling of spherical random fields (as considered in \cite{creasey2018fast,emery2019turning,HKS}), this application of the needlet-type expansion may thus be of interest in particular for $\mathsf{A}$ permitting large values of $r$ and when evaluations at highly irregularly spaced or concentrated points on $\bbS^2$ are required.}

\bibliographystyle{amsplain}
\bibliography{Bachmayr_Djurdjevac}

\end{document}